\theoremstyle{plain}
\renewcommand*{\backrefalt}[4]{\ifcase #1 (Not cited).\or (Cited p.~#2).\else (Cited pp.~#2).\fi} 
\theoremstyle{definition}
\newtheorem {theorem}{Theorem}[section]
\newtheorem{rem}[theorem]{Remark}
\newtheorem{ques}[theorem]{Question}
\newtheorem{prop}[theorem]{Proposition}
\newtheorem{exam}[theorem]{Example}
\newtheorem{lemma}[theorem]{Lemma}
\newtheorem{defin}[theorem]{Definition} 
\newtheorem{cor}[theorem]{Corollary}
\title[]{Linearity and virtual poly-freeness of the fundamental group of plane curves of degree at most five}
\author{Shengkui Ye}
\address{NYU Shanghai, No.567 Yangsi West 
  Rd, Pudong New Area, Shanghai, 200124, P.R. China \\
NYU-ECNU Institute of Mathematical Sciences at NYU Shanghai, 3663 Zhongshan Road North, Shanghai, 200062, China}
\email{sy55@nyu.edu}
\author{Kejia Zhu}
\address{Department of Mathematics, University of California at Riverside}
\email{kzhumath@gmail.com}
\begin{document}
\maketitle

\begin{abstract}
We prove that for any algebraic plane curve $C$ of degree at most $5$, the fundamental group $\pi_1(\mathbb CP^2\setminus C)$ is linear and virtually polyfree. As a consequence, we answer positively the open question on the residual finiteness of these groups for all plane curves of degree at most $5$.
\end{abstract}

\section{Introduction}
Let $C$ be an algebraic curve in the projective plane $\mathbb CP^2$.  We consider the following question.

\begin{ques}
\label{q1}
    Is the fundamental group $\pi_1(\mathbb CP^2\setminus C)$ of an algebraic plane curve residually finite?
\end{ques}

This question has been open for a long time. Zariski had already posed the question of the existence of non-residually finite groups, cf. § 1 and Appendix 1) of Chapter VIII of \cite{Zar} (see also Libgober \cite[Problem 3.1] {Lib}).
Even in the low-degree cases (degree up to five), such a question was still unknown (see the survey article of Artal Bartolo-Cogolludo Agustın-Tokunaga \cite{bartolo2008survey}, page 3). Toledo \cite{Toledo} constructed a smooth projective variety with a non-residually finite fundamental group, answering in the negative a question of Serre. However, it is not clear whether one can choose the example as the complement of an algebraic curve with a non-residually finite fundamental group. Koberda and Suciu \cite[Problem 1.9]{KS} asked Question \ref{q1} for affine line arrangements.

The purpose of this article is to answer Question \ref{q1} positively for algebraic curves of degree at most five. Recall that a group $G$ is linear if it can be embedded into a general linear group $\mathrm{GL}(n,F)$ for some field $F$ and positive integer $n$. A linear group is residually finite. A group $G$ is called poly-free if there is a finite
subnormal sequence%
\begin{equation*}
1<G_{1}\trianglelefteq G_{2}\trianglelefteq \cdots \trianglelefteq G_{n}=G
\end{equation*}%
such that the quotient $G_{i}/G_{i+1}$ is free for each $i$. Polyfree groups have many nice properties, like left-orderable, having Tits alternative, and so on. We say a group $G$ virtually has property P, if there is a finite-index subgroup $H<G$ that has the property P. The following is our main result.

\begin{theorem}
\label{main}
    Let $C\subset\mathbb CP^2$ be an algebraic plane curve of degree at most five. Then the fundamental group $\pi_1(\mathbb CP^2\setminus C)$ is linear and virtually polyfree. In particular, $\pi_1(\mathbb CP^2\setminus C)$ is residually finite.
\end{theorem}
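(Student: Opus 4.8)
The plan is to exploit the fact that, for $\deg C\le 5$, the homeomorphism type of the pair $(\mathbb{C}P^2,C)$ — and hence the isomorphism type of $\pi_1(\mathbb{C}P^2\setminus C)$ — is controlled by the finitely many embedded topological types that have been classified in the literature (by degree, number and type of irreducible components, and local singularity data). First I would reduce to this finite list: decompose $C$ into irreducible components, record the singularities, and appeal to the classification of plane curves of low degree together with the Zariski--van Kampen presentations of the corresponding complements. This turns the theorem into the assertion that each group on a finite explicit list is linear and virtually poly-free.

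Next I would isolate the closure properties that make ``linear and virtually poly-free'' a convenient class $\mathcal P$ to work with. Virtual poly-freeness is preserved under passing to finite-index subgroups and overgroups, under finite direct products, and under extensions $1\to F\to G\to Q\to 1$ in which $F$ is free and $Q$ is (virtually) poly-free; the last point is exactly what lets fibration data be assembled into a subnormal series with free quotients, by pulling a poly-free series on $Q$ back to $G$ and prepending $F$. For linearity I would rely on the classical facts that free groups and finitely generated abelian groups are linear, that the braid groups $B_n$ and pure braid groups $P_n$ are linear (Krammer--Bigelow), and that linearity passes to subgroups and finite direct products; since linearity is \emph{not} automatic across the extensions above, I would keep track of explicit faithful representations stage by stage rather than invoking a formal closure property.

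The geometric engine is the generic pencil of lines. Projecting from a point off $C$ expresses $\mathbb{C}P^2$ minus $C$ and finitely many exceptional fibers as a fiber bundle over a punctured $\mathbb{C}P^1$ whose fiber is a punctured line, hence has free fundamental group; the monodromy lands in a braid group. In the cases where $C$ meets the generic pencil transversally and the braid monodromy is sufficiently unknotted, this exhibits $\pi_1$ as an iterated extension with free kernels over a free base, so that $\pi_1$ is poly-free outright, with the pure braid group $P_d$ (itself poly-free via the Fadell--Neuwirth sequence $1\to F_{d-1}\to P_d\to P_{d-1}\to 1$) supplying the building blocks. The line at infinity contributes the single relation $g_1\cdots g_d=1$, which I would absorb into the series.

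The main obstacle is the finitely many non-generic configurations — chiefly certain reducible quartics and the irreducible or maximally singular quintics — whose fundamental groups are genuinely non-abelian and are not visibly fibered, because the braid monodromy introduces relations that collapse the naive poly-free series. For these I would work from the explicit Zariski--van Kampen presentation of each such group, produce by hand a finite-index subgroup admitting a subnormal series with free quotients (typically the kernel of the abelianization or a congruence-type subgroup), and exhibit a faithful linear representation, usually by embedding the group into a braid group or into the mapping class group of a punctured sphere and invoking its known linearity. Verifying that these finitely many exceptional groups all lie in $\mathcal P$ is where the real case analysis concentrates, and the degree bound $5$ is precisely what keeps this list finite and each exceptional group tractable; residual finiteness then follows since every linear group is residually finite.
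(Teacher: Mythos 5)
Your reduction to a finite list of combinatorial types and your closure properties for ``linear and virtually poly-free'' are sound and match the paper's skeleton, but your central mechanism is not correct. Projecting from a generic point does \emph{not} exhibit $\pi_1(\mathbb{C}P^2\setminus C)$ as an iterated extension with free kernels: the fibration exists only after the exceptional fibers are deleted, and the Zariski--van Kampen theorem then presents $\pi_1(\mathbb{C}P^2\setminus C)$ as a \emph{quotient} of the free group of the fiber by the braid monodromy relations, not as an extension of anything by anything. There is no series into which the relations can be ``absorbed.'' Your claim fails already in the most generic case: a smooth quartic or quintic meets every pencil as transversally as possible, yet $\pi_1=\mathbb{Z}/d\mathbb{Z}$, a torsion group, which is not poly-free at all (only trivially virtually so). Generic configurations give small abelian, often finite, groups precisely \emph{because} the monodromy relations collapse everything; honest fibrations of the complement occur only in special configurations (all lines through one point, giving $F_3$; a conic with two tangent lines through an external point, giving $F_2\rtimes\mathbb{Z}$). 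For the remaining quartic cases the paper proves abelianness by a completely different tool, Nori's theorem: blow up to normal crossings and verify the self-intersection inequality $D\cdot D>2r(D)$ for the strict transform; nothing in your plan produces these conclusions.

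The second gap is that all the substantive work is deferred to the sentence ``produce by hand a finite-index subgroup \dots\ and exhibit a faithful linear representation, usually by embedding the group into a braid group or into the mapping class group of a punctured sphere,'' and for the groups that actually occur this recipe is unavailable. Degtyarev's quintic list contains, for instance, $\Pi=\langle u,v\mid u^3=v^7=(uv^2)^2\rangle$ (type $C_5(A_6\sqcup 3A_2)$), a $\mathbb{Z}$-central extension of the $(2,3,7)$-triangle group, hence virtually a nontrivial $\mathbb{Z}$-central extension of a genus-$3$ surface group. Central extensions are exactly where linearity is delicate (as you note yourself, linearity does not pass through extensions), no embedding into a braid group or punctured-sphere mapping class group is in sight, and the paper must resort to a bespoke argument: map the group injectively into the product of its surface-group quotient and a nilpotent quotient that is faithful on the center. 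Likewise, the other nonabelian quintic groups are handled not by braid-group embeddings but by recognizing them, after nontrivial changes of generators, as triangle Artin groups $\mathrm{Art}_{333}$, $\mathrm{Art}_{244}$, $\mathrm{Art}_{234}$ (linear since they embed in finite-type Artin groups, poly-free by Squier), as torus-link groups $T_{p,q}$, $T_{2,2r}$ (virtually free-by-central-$\mathbb{Z}$, hence virtually $F\times\mathbb{Z}$), as the virtually-$\mathbb{Z}$ groups $\mathrm{Gr}\langle2,3,5\rangle$ and $G_n(t+1)$, or, for type $C_2\sqcup 3C_1$, by running Reidemeister--Schreier to locate an index-$2$ right-angled Artin subgroup. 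Without carrying out identifications of this kind, your proof has no content in precisely the cases where the theorem is not classical.
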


It is well-known (cf.\cite[2.3.2]{cogolludo2011braid}) that for any plane curve $C\subset\mathbb CP^2$ of degree $\le3$, the fundamental group $\pi_1(\mathbb CP^2\setminus C)$ is either finitely generated abelian or a free group $\mathbb Z\ast\mathbb Z$. Thus $\pi_1(\mathbb CP^2\setminus C)$ is linear. For degree-$4$ curves, we applied the results due to Nori \cite{nori1983zariski} and Cogolludo-Agustin and Elduque  \cite{cogolludo2025quasi} to conclude that
that $\pi_1(\mathbb{C}P^2\setminus C)$ must be one of the following groups: a (virtually) finitely generated abelian group, a finite group, the braid group on the sphere $B_3(S^2)$, the free group $F_3$, the braid group $B_3$, the free product $\mathbb{Z} \ast \mathbb{Z}/2\mathbb{Z}$, or the semi-direct product $F_2 \rtimes \mathbb{Z}$, (see Subsection \ref{degree4} for details), which are all virtually poly-free and linear. For degree-$5$ curves, Degtyarev \cite{degtyarev1987topology} obtained presentations of all the $\pi_1(\mathbb{C}P^2\setminus C)$ when $\deg C=5$. In Section \ref{degree5}, we will review the presentations of these groups from Degtyarev and prove the virtual polyfreeness and linearity of these fundamental groups. Our key observation is that most of the non-abelian groups are virtual surface groups or (triangle or right-angled) Artin groups.

\subsection*{Acknowledgments}
We would like to thank Anatoly Libgober, Jose Ignacio Cogolludo, Corey Bregman for useful discussions and Alex Degtyarev for pointing us the results from his thesis and book.

\section{ Basic notations and facts}
We introduce some basic notations and facts that will be used in the subsequent sections.
\subsection{Poly-free groups}

Recall that a group $G$ is called poly-free if there is a finite
subnormal sequence%
\begin{equation*}
1<G_{1}\trianglelefteq G_{2}\trianglelefteq \cdots \trianglelefteq G_{n}=G
\end{equation*}%
such that the quotient $G_{i}/G_{i+1}$ is free for each $i$. Here the rank of $%
G_{i}/G_{i+1}$ is not required to be finite. When each quotient $%
G_{i}/G_{i+1}$ is cyclic (or infinite cyclic), the group $G$ is called polycyclic (resp.
poly-$\mathbb{Z}$). The group $G$ is called normally poly-free if
additionally each $G_{i}$ is normal in $G$.

Poly-free groups have many nice properties. For example, they are torsion-free, locally indicable,  having finite asymptotic dimension, satisfying the Baum--Connes Conjecture with coefficients, satisfying the Farrell--Jones Conjecture if they are normally
poly-free.  Furthermore,  a poly-free group satisfies the Tits alternative, i.e.~any nontrivial
subgroup either is polycyclic or contains a nonabelian free subgroup. For more details, see \cite{YW}. 

\subsection{Artin groups}

 Let $\Gamma =(V,E)$ be a complete graph with a vertex set $V$ and edge set $E$, and each edge $e$ is labelled by an
integer $m_{e}\in \{2,3,....,\infty \}.$
 The Artin group $\mathrm{Art}_{\Gamma }$ is given by the presentation%
\begin{eqnarray*}
\langle a,b &\in &V\mid \{a,b\}^{m_{ab}}=\{b,a\}^{m_{ab}}, [a,b] \in E\rangle ,
\end{eqnarray*}%
where $\{a,b\}^{m}=ababa...$, a word of length $m.$ When $m=\infty$, we assume that there is no such relator. Similarly, the Coxeter group $\mathrm{Cox}_\Gamma$ is defined by the presentation
\begin{eqnarray*}
\langle a,b &\in &V\mid \ a^2=1, \{a,b\}^{m_{ab}}=\{b,a\}^{m_{ab}}, 
[a,b] \in E\rangle.
\end{eqnarray*}
There is an obvious epimorphism $\phi: \mathrm{Art}_\Gamma \rightarrow \mathrm{Cox}_\Gamma$ mapping generators to the corresponding generators. An Artin group $\mathrm{Art}_\Gamma$ is of finite type, if the corresponding Coxeter group $\mathrm{Cox}_\Gamma$ is finite.

When each $m_{e}=\infty ,$ the group $\mathrm{Art}_{\Gamma }$ is free.  When each $m_{e}=2,$ the group $\mathrm{Art}_{\Gamma }=\mathbb{Z}%
^{|V|} $. When each $m_{e}\in \{2,\infty \},$ $\mathrm{Art}_{\Gamma }$ is a
Right-Angled Artin group (RAAG). It is well-known that an RAAG is isomorphic to a subgroup of the special linear group $\mathrm{SL}(n,\mathbb{Z})$. Hermiller–{\v{S}}iuni{\'c} \cite{hermiller2007poly} proved that a right-angled Artin group is always poly-free. When $\Gamma$ is a complete graph on $n$ vertices with a spanning path having each edge labeled by $3$ and all other edges are labeled by $2$, the $\mathrm{Art}_{\Gamma }$ is the braid group $B_{n+1}$ of $n+1$ strings. If $\Gamma$ is a triangle with three vertices and three edges labeled by $M,N,P$, we will use the $\mathrm{Art}_{MNP}$ to denote $\mathrm{Art}_{\Gamma }$. Bigelow \cite{Big} and Krammer \cite{Kra} have proved that all braid groups $B_n$ are linear. It is also known that any finite-type Artin group is linear (see \cite{CW}).  An Artin group $\mathrm{Art}_{B_n}$ of type $B_n$ is isomorphic to the semi-direct product $\mathrm{Art}_{\tilde{A}_{n-1}} \rtimes \mathbb{Z}$, where $\mathrm{Art}_{\tilde{A}_{n-1}}$ is the Artin group of affine type  (cf. Charney-Crisp \cite[page 6]{CC}).  Therefore, the Artin group $\mathrm{Art}_{\tilde{A}_{n-1}}$ of affine type is linear.  Squier \cite{Sq} proved that the affine triangle Artin group $\mathrm{Art}_{333},\mathrm{Art}_{244},\mathrm{Art}_{236} $ are polyfree.

\subsection{Algebraic curves and blow up}
The following are some basic notations and facts from algebraic geometry.

\begin{defin}
A plane curve in $\mathbb CP^2$ is given by the zero locus of a homogeneous polynomial $f(x,y,z)\in \mathbb C[x,y,z]$, denoted by $V(f(x,y,z)$.\\
$\bullet$ A curve $V(f)$ is reducible, if $f=f_1\cdot f_2$, so that $V(f)=V(f_1)\cup V(f_2)$. \\
$\bullet$ A curve $V(f)$ is reduced, if $f$ doesn't have repeated factors.\\
$\bullet$ A curve in $\mathbb CP^2$ is conic, if its degree is $2$; A curve in $\mathbb CP^2$ is cubic, if its degree is $3$; A curve in $\mathbb CP^2$ is quartic, if its degree is $4$; A curve in $\mathbb CP^2$ is quintic, if its degree is $5$.
\end{defin}

\begin{rem}
    For a conic curve, it's irreducible if and only if it's smooth. For a cubic curve, there are exactly $3$ cases: (1) it's smooth; (2) it has a node as the unique singularity; (3) it has a cusp as the unique singularity. Here, a node means it's locally given by the equation $y^2-x^2$ (in an affine chart), a cusp means it's locally given by the equation $y^2-x^3$ (in an affine chart).
\end{rem}

\begin{defin}
    Let $Y$ be a complex surface and $p \in Y$ a smooth point of $Y$. Choose a coordinate neighborhood $(U, z)$ centered at $p$ such that $z(p) = 0$ and $z(U) \subset \mathbb{C}^2$. Define:
\[
U' = \{(x, \ell) \in U \times \mathbb{CP}^1 : x \in \ell\},
\]
where $\ell$ is a line through the origin in $\mathbb{C}^2$ (under the identification $z: U \to \mathbb{C}^2$). Let $\pi: U' \to U$ be the projection $(x, \ell) \mapsto x$. Then:
\begin{itemize}
    \item $\pi$ restricts to a biholomorphism $U' \setminus \pi^{-1}(p) \to U \setminus \{p\}$.
    \item The \emph{blow-up of $Y$ at $p$} is defined as:
    \[
    \mathrm{Bl}_p(Y) = (Y \setminus \{p\}) \cup U',
    \]
    where the union is taken by identifying each $(x,l) \in U' \setminus \pi^{-1}(p)$ with $x \in U \setminus \{p\}$.
    \item The \emph{blow-up projection} $\pi: \mathrm{Bl}_p(Y) \to Y$ is defined as:
   $\pi([y]) = y$ for $y \in Y \setminus \{p\}$; $\pi([(x, \ell)]) = x$ for $(x, \ell) \in U'$.
    \end{itemize}
 The preimage $\pi^{-1}(p)\cong\mathbb CP^1$ is called an exceptional divisor.
\end{defin}

\begin{rem}
Topologically, the $\mathrm{Bl}_p(Y)$ above is diffeomorphic to $Y\#\overline{\mathbb CP^2}$, thus blowing up at a point will not change the fundamental group. When we blow up at a node, then the exceptional divisor and the 2 branches are transverse with 2 intersections; when we blow up at a cusp, the 2 branches merge into one smooth branch and tangent to the exceptional divisor, forming a tacnode, locally given by $y^2-x^4$. See Figure \ref{fig:blowup}. For more details, see \cite{harris2013algebraic}.
\end{rem}

\begin{figure}[h]
    \centering
\begin{tikzpicture}[scale=1.8]
    
    \begin{scope}[blue]
        \draw[thick, domain=-0.8:0.8, smooth, samples=100] 
            plot ({\x*\x}, {\x*\x*\x});
        
        \fill[orange] (0,0) circle (1.5pt) node[below left] {};
        \node[above] at (0.4,0.7) {\textbf{Cusp}};
    \end{scope}

    \begin{scope}[blue, shift={(2,0)}]
        \draw[thick, domain=-0.8:0.8, smooth, samples=100] 
            plot ({{\x*\x},\x});
        
        \fill (0,0) circle (1.5pt) node[below left] {};
        \node[left] at (-0.1,0) {\textbf{Tacnode}};
    \end{scope}
    \begin{scope}[orange, shift={(2,0)}]
        \draw[thick, domain=-0.8:0.8, smooth, samples=100] 
            plot ({0,{\x}});
        
    \end{scope}
\begin{scope}[red,shift={(4,0)}]
        \draw[thick, domain=-1.15:1.15, smooth, samples=100] 
            plot ({\x*\x - 1}, {\x*\x*\x - \x});
        
        \fill[cyan] (0,0) circle (1.5pt) node[below] {};
        \node[right] at (-0.55,0.6) {\textbf{Node}};
    \end{scope}

\begin{scope}[red,shift={(5,0)}]
        \draw[thick, domain=-0.6:0.6, smooth, samples=100] 
            plot ({3*\x*\x},{\x});
        \node[right] at (0.8,0) {\textbf{Transversal}};
    \end{scope}

 \begin{scope}[cyan, shift={(5.7,0)}]
        \draw[thick, domain=-0.8:0.8, smooth, samples=100] 
            plot ({0,{\x}});
        
    \end{scope}
    
\end{tikzpicture}
\caption{The blowup of a cusp and a node.}    \label{fig:blowup}
\end{figure}
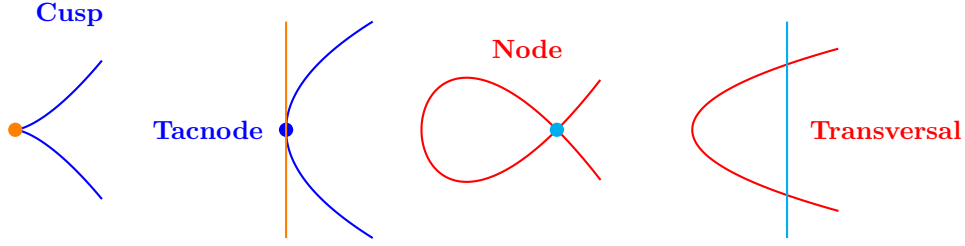

\begin{defin}
    Given a blow up $\pi:X\to Y$ at a point $p$ contained in a curve $C\subset Y$, the strict transform of $C$, is defined to be $\tilde C:=\overline{\pi^{-1}(C\setminus\{p\})}$.  The total transform of $C$ is $\pi^*(C):=\pi^{-1}(C)\subset X$. 
\end{defin}

\begin{rem}\label{Rem:BlowExp}
    If we blow up at a point $P$ of a curve $C$, then $\pi^*(C)=\tilde C+mE$, where $E$ is the exceptional divisor and $m$ is the multiplicity of $C$ at $P$. In particular, if $C$ is smooth at $P$, then $m=1$; if $P$ is a node or a cusp, then $m=2$. See \cite[Lemma II.2]{beauville1996complex} for details.
\end{rem}

\begin{defin}
    Given two curves $C, D$ in a complex surface $X$, the intersection number $C\cdot D$ is defined as follows: choose a nonsingular curve $C'$ linear equivalent to $C$, and a nonsingular curve $D'$ linear equivalent to $D$ and transversal to $C'$. Define $C\cdot D$ to be the number of intersections of $C'$ and $D'$.
\end{defin}
 In particular, $C\cdot C$ is the self-intersection number of $C$ when $D=C$. Given a plane curve $C\subset \mathbb CP^2$ of degree $d$, $C\cdot C=d^2$ (counting multiplicity) (c.f. \cite[V.1.4.2, page 360]{hartshorne2013algebraic}). Furthermore, when we blow up at a singular point of a cubic/conic curve $C$,   we have the following lemma following from \cite[Proposition I.8]{beauville1996complex}. We will use it for self-intersections in Proposition \ref{NormalCrossing}.
 
\begin{lemma}
    The self-intersection number of the strict transform $\tilde C$ will have the relation: if we blow at a cusp or node, $\tilde C\cdot \tilde C=C\cdot C-4$, while if we blow up at a smooth point, $\tilde C\cdot \tilde C=C\cdot C-1$.
\end{lemma}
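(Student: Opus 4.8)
The plan is to express the strict transform in terms of the total transform and the exceptional divisor, and then reduce the computation to the standard intersection identities governing a point blow-up. Write $\pi\colon X\to Y$ for the blow-up at $P\in C$, let $E=\pi^{-1}(P)$ be the exceptional divisor, and let $m$ denote the multiplicity of $C$ at $P$. By Remark \ref{Rem:BlowExp} the total transform decomposes as $\pi^*(C)=\tilde C+mE$, with $m=1$ when $P$ is a smooth point and $m=2$ when $P$ is a node or a cusp.

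I would then invoke the blow-up intersection formulas from \cite[Proposition I.8]{beauville1996complex}, namely $\pi^*(C)\cdot\pi^*(C)=C\cdot C$, $\ \pi^*(C)\cdot E=0$, and $E\cdot E=-1$. The first records that pulling back preserves intersection numbers; the second that the total transform is disjoint (in homology) from the exceptional curve; the third is the defining self-intersection of the $(-1)$-curve $E$.

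Solving the decomposition for the strict transform gives $\tilde C=\pi^*(C)-mE$, so expanding the self-intersection by bilinearity will yield
\begin{equation*}
\tilde C\cdot\tilde C=\pi^*(C)\cdot\pi^*(C)-2m\,\pi^*(C)\cdot E+m^2\,E\cdot E=C\cdot C-m^2.
\end{equation*}
Substituting $m=1$ then gives $\tilde C\cdot\tilde C=C\cdot C-1$ at a smooth point, while $m=2$ gives $\tilde C\cdot\tilde C=C\cdot C-4$ at a node or a cusp, as claimed.

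There is no serious obstacle once the three identities are in hand: the whole argument is a single bilinear expansion. The only point requiring care is bookkeeping the multiplicity $m$ correctly — in particular recognizing that a cusp, like a node, has multiplicity $2$ (its local equation $y^2-x^3$ has lowest-degree part of degree $2$), so that both singular cases produce the same correction term $m^2=4$. This multiplicity input is precisely what Remark \ref{Rem:BlowExp} supplies, and it is the one ingredient that could be misstated if one conflated the singularity type with its multiplicity.
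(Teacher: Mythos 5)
Your proposal is correct and follows essentially the same route as the paper: both rest on the decomposition $\pi^*(C)=\tilde C+mE$ from Remark \ref{Rem:BlowExp}, the intersection identities of \cite[Proposition I.8]{beauville1996complex}, and a single bilinear expansion giving $\tilde C\cdot\tilde C=C\cdot C-m^2$. The only (immaterial) difference is bookkeeping: you expand $\bigl(\pi^*(C)-mE\bigr)^2$ using $\pi^*(C)\cdot E=0$, whereas the paper expands $\bigl(\tilde C+mE\bigr)^2$ using $\tilde C\cdot E=m$; these two auxiliary identities are equivalent given the decomposition, so the arguments coincide.
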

\begin{proof} By \cite[Proposition I.8]{beauville1996complex}, we have
    \[
\pi^*(C) \cdot \pi^*(C) = C \cdot C.
\]
By \(\pi^*(C) = \widetilde{C} + mE\) (see Remark \ref{Rem:BlowExp}), we have

\[
\widetilde{C}^2 + 2m(\widetilde{C} \cdot E) + m^2E^2.
\]
Since \(\widetilde{C} \cdot E = m\) and \(E^2 = -1\), we have
\[
\widetilde{C}^2 + 2m^2 +m^2(-1) = C\cdot C,
\]
\[
\widetilde{C}\cdot \widetilde{C}=C\cdot C - m^2.
\] 
By Remark \ref{Rem:BlowExp}, the result follows.
\end{proof}

 \begin{defin}\cite[see Remark 3 for more details]{bartolo2008survey}
The combinatorial type of a curve $C$ is given by a 7-tuple:\[
\Bigl( \mathrm{Irr}(C),\: \deg,\: \mathrm{Sing}(C),\: \Sigma_{\mathrm{top}}(C),\: \sigma_{\mathrm{top}},\: \{ C(P) \}_{P \in \mathrm{Sing}(C)},\: \{ \beta_{P} \}_{P \in \mathrm{Sing}(C)} \Bigr),
\]
where:
\begin{itemize}
    \item $\mathrm{Irr}(C)$ is the set of irreducible components of $C$ and $\deg \colon \mathrm{Irr}(C) \to \mathbb{Z}$ assigns to each irreducible component its degree.
    
    \item $\mathrm{Sing}(C)$ is the set of singular points of $C$, $\Sigma_{\mathrm{top}}(C)$ is the set of topological types of $\mathrm{Sing}(C)$, where the topological type of a singularity is the homeomorphism type of its link, and $\sigma_{\mathrm{top}} \colon \mathrm{Sing}(C) \to \Sigma_{\mathrm{top}}(C)$ assigns to each singular point its topological type.
    
    \item $C(P)$ is the set of local branches of $C$ at $P \in \mathrm{Sing}(C)$ and $\beta_{P} \colon C(P) \to \mathrm{Irr}(C)$ assigns to each local branch the global irreducible component containing it.
\end{itemize}
\end{defin}

\begin{rem}\label{Zariskipair}
It is well known that, for the curves of degree $\le5$, if two curves $C_a, C_b$ have the same combinatorial type, then $\pi_1(\mathbb CP^2\setminus C_a)\cong \pi_1(\mathbb CP^2\setminus C_b)$ (it is recorded in Degtyarev's thesis \cite{degtyarev1987topology}). As a consequence, for a specific combinatorial type, it suffices to consider the calculation via a specific equation whose zero locus satisfies the given combinatorial type.
\end{rem}

\subsection{Presentation of subgroups: Reidemeister-Schreier method}

Let $G=\langle a_{i},i\in I|r_{i},i\in J\rangle $ be a presentation of a
group, where $I,J$ are two index sets.  For a subgroup $H\leq G$,  a system $R$
of words in the generators $a_{i},i\in I$, is called a Schreier system for $G
$ modulo $H$ if 

\begin{itemize}
\item[(i)] every right coset of $H$ in $G$ contains exactly one word of $R$
(i.e.~$R$ forms a system of right coset representatives); 

\item[(ii)] for each word in $R$ any initial segment is also in $R$ (i.e.~initial segments of right coset representatives in $R$ are again right coset
representatives). 

\item[(iii)] the empty word $\emptyset \in R.$
\end{itemize}

For any $g\in G,$ let $\bar{g}$ be the unique element in $R$ such that $Hg=H%
\bar{g}.$ For each $K\in R,a=a_{i},$ let $s_{K,a}=Ka\overline{Ka}^{-1}\in H.$
For more details on the Schreier system, see \cite[Section 2.3]{mks}. The
following Reidemeister theorem gives a presentation of the subgroup $H$.

\begin{theorem}
\label{schreier}
\cite[Corollary 2.7.2 + Theorem 2.8, Theorem 2.9]{mks} The subgroup $H$ has a
presentation%
\begin{eqnarray*}
\langle s_{K,a},K &\in &R,a\in \{a_{i},i\in I\}\mid s_{K,a}=1,K\in R,a\in
\{a_{i},i\in I\},\text{if }Ka\equiv \overline{Ka}, \\
\tau (Kr_{i}K^{-1})& =&1,i\in J,K\in R\rangle ,
\end{eqnarray*}%
where $Ka\equiv \overline{Ka}$ means that the two words are equivalent in the
free group $F(\{a_{i},i\in I\})$, and $\tau \ $is the
Reidemeister--Schreier rewriting function defined as follows
\begin{eqnarray*}
\tau  &:&F(\{a_{i},i\in I\})\rightarrow F(\{s_{K,a},K\in R,a\in \{a_{i},i\in
I\}\}), \\
a_{i_{1}}^{\varepsilon _{1}}a_{i_{2}}^{\varepsilon _{2}}\cdots
a_{i_{m}}^{\varepsilon _{m}} &\mapsto &s_{K_{i_{1}},a_{i_{1}}}^{\varepsilon
_{1}}s_{K_{i_{2}},a_{i_{2}}}^{\varepsilon _{2}}\cdots
s_{K_{i_{m}},a_{i_{m}}}^{\varepsilon _{m}}
\end{eqnarray*}
with%
\begin{equation*}
K_{i_{j}}=\left\{ 
\begin{array}{c}
\overline{a_{i_{1}}^{\varepsilon _{1}}a_{i_{2}}^{\varepsilon _{2}}\cdots
a_{i_{j-1}}^{\varepsilon _{j-1}}},\text{ if }\varepsilon
_{j}=1, \\ 
\overline{a_{i_{1}}^{\varepsilon _{1}}a_{i_{2}}^{\varepsilon _{2}}\cdots
a_{i_{j}}^{\varepsilon _{j}}},\text{ if }\varepsilon _{j}=-1.%
\end{array}%
\right. 
\end{equation*}
\end{theorem}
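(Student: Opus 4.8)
The plan is to realize the classical Reidemeister--Schreier theorem topologically and read the presentation off a covering space, rather than reproving Nielsen--Schreier from scratch. First I would set $F = F(\{a_i : i \in I\})$ and let $\pi : F \to G$ be the canonical surjection, whose kernel $N = \langle\langle r_j : j \in J\rangle\rangle$ is the normal closure of the relators. Put $\hat H = \pi^{-1}(H)$, so that $N \le \hat H \le F$ and $H \cong \hat H / N$. Because $\pi$ is onto and $N \le \hat H$, the right cosets of $\hat H$ in $F$ correspond bijectively to the right cosets of $H$ in $G$; hence a Schreier system $R$ in the sense of (i)--(iii) is simultaneously a set of right coset representatives for $\hat H$ in $F$ whose prefix-closure (ii) makes it a genuine Schreier transversal.

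Next I would exhibit a free basis of $\hat H$. By Nielsen--Schreier $\hat H$ is free, and the Reidemeister--Schreier form of that theorem says that, relative to $R$, the elements $s_{K,a} = Ka\,\overline{Ka}^{-1}$ with $Ka \not\equiv \overline{Ka}$ form a free basis, while those with $Ka \equiv \overline{Ka}$ are trivial in $F$. I find it cleanest to see this geometrically: build the presentation $2$-complex $X$ of $G$ (one vertex, one edge per generator $a_i$, one $2$-cell per relator $r_j$), so $\pi_1(X) \cong G$, and pass to the covering $p : \tilde X \to X$ corresponding to $H \le \pi_1(X)$, so $\pi_1(\tilde X) \cong H$. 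The $1$-skeleton of $\tilde X$ is the Schreier coset graph, whose vertices are the cosets and whose $a_i$-edges record right multiplication. The prefix condition (ii) says exactly that $R$ spans a subtree $T$ through the basepoint, i.e.\ a spanning tree of the graph. Edges in $T$ yield the trivial generators, i.e.\ the relations $s_{K,a} = 1$, and the edges outside $T$ give precisely the free generators $s_{K,a}$ of $\pi_1$ of the $1$-skeleton.

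Then I would account for the $2$-cells and conclude. Each relator $r_j$ is attached along a loop based at a coset $K$; its lift is a based edge-loop whose word is produced by the rewriting function $\tau$, so the $2$-cells of $\tilde X$ impose exactly the relations $\tau(K r_j K^{-1}) = 1$ for $K \in R$ and $j \in J$. Applying van Kampen's theorem — equivalently, the standard computation of $\pi_1$ of a $2$-complex from a spanning tree of its $1$-skeleton — then yields the asserted presentation of $H = \pi_1(\tilde X)$. The reduction $H \cong \hat H/N$ together with the observation that $N$ is the $\hat H$-normal closure of $\{K r_j K^{-1}\}$ (since any $g \in F$ factors as $g = hK$ with $h \in \hat H$, $K \in R$, whence $g r_j g^{-1} = h(K r_j K^{-1})h^{-1}$) shows these relations generate all of $N$ inside the free group $\hat H$.

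The main obstacle is the Reidemeister--Schreier rewriting lemma itself: that for $w \in \hat H$ the word $\tau(w)$ is precisely the expression of $w$ in the free basis $\{s_{K,a}\}$, so that $\tau$ is a well-defined retraction $F \to \hat H$ restricting to the identity on $\hat H$. Combinatorially this is an induction on the length of $w$ in which the bookkeeping representatives $K_{i_j}$ telescope, and getting the sign conventions for $\varepsilon_j = \pm 1$ exactly right — the reason for the asymmetric definition of $K_{i_j}$ — is the delicate point. Topologically the same statement becomes transparent: $\tau(w)$ simply reads off, in order, the edges outside $T$ crossed by the lift of the loop labelled $w$, which makes the telescoping geometrically obvious. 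Everything else in the argument is then routine.
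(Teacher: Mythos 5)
Your proof is correct, but it takes a genuinely different route from the paper's --- in fact the paper contains no proof at all: Theorem \ref{schreier} is imported verbatim from Magnus--Karrass--Solitar \cite{mks}, whose argument is purely combinatorial. There one first proves Schreier's basis theorem (the elements $s_{K,a}$ with $Ka\not\equiv\overline{Ka}$ freely generate $\hat H=\pi^{-1}(H)\le F$) by cancellation arguments in free groups, then shows by induction on word length that $\tau$ is a retraction of the ambient free group onto $\hat H$ expressing each element of $\hat H$ in that basis, and finally checks that $N=\ker(F\to G)$ is the $\hat H$-normal closure of the conjugates $Kr_jK^{-1}$. Your covering-space argument reaches the same presentation via van Kampen applied to the cover of the presentation complex, packaging Schreier's basis theorem as the statement that non-tree edges of a graph freely generate its fundamental group, the prefix condition (ii) as the statement that $R$ spans a tree in the coset graph, and the rewriting lemma as the statement that a lifted loop reads off its non-tree edges in order. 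The trade-offs are the expected ones: the combinatorial proof is elementary, self-contained, and algorithmic, needing no topology; yours is shorter and conceptually transparent, at the price of invoking covering-space theory and the $\pi_1$-calculus of $2$-complexes. You also correctly supply the one algebraic step the topology does not hand you for free --- that every $g\in F$ factors as $g=hK$ with $h\in\hat H$, $K\in R$, so $gr_jg^{-1}=h(Kr_jK^{-1})h^{-1}$ and the listed relators normally generate all of $N$ inside $\hat H$ --- which is the step most often glossed over in sketches of this theorem.
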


\section{Curves of degree $\le4$}
In this section, we will show that given any plane curve $C$ of degree $\le4$, $\pi_1(\mathbb CP^2\setminus C)$ is linear, thus residually finite. We only consider the case of reduced curves, since if there are double curves $V(f^2(x,y,z))$, topologically it reduces to the lower degree case.

\subsection{Curves of degree $\le3$}
\noindent\\
It is well-known (cf.\cite[2.3.2]{cogolludo2011braid}) that for any plane curve $C\subset\mathbb CP^2$ of degree $\le3$, $\pi_1(\mathbb CP^2\setminus C)$ is either finitely generated abelian or a free group $\mathbb Z\ast\mathbb Z$. Thus $\pi_1(\mathbb CP^2\setminus C)$ is linear and virtually polyfree.

\subsection{Quartic curves}\label{degree4}
\noindent\\
If $C\subset \mathbb CP^2$ is an irreducible quartic curve that is not a 3-cuspidal quartic, then the fundamental group $\pi_1(\mathbb CP^2\setminus C)$ is abelian (cf. \cite[p.130, 4.3]{dimca2012singularities}). When $C$ is a 3-cuspidal quartic $\pi_1(\mathbb CP^2\setminus C) \cong B_3(S^2)$, the sphere braid group (cf. \cite{zariski1936poincare}), a finite group of order 12  and thus linear (see also\cite[4.8, page 131]{dimca2012singularities}). Since any finitely generated abelian group is linear, we know that for any irreducible quartic curve $C$,  $\pi_1(\mathbb CP^2\setminus C)$ is linear. Also, these groups are known to be virtually polyfree.

For the reducible quartic curve $C$, although people have the techniques to calculate the fundamental groups, unfortunately, the comprehensive results have not been recorded anywhere. In this subsection, we will list all the cases with proofs.


We will apply the result from Nori to show that some $\pi_1(\mathbb CP^2\setminus C)$'s are abelian.


\begin{prop}\cite[Proposition 3.27]{nori1983zariski}\label{NormalCrossing}
   Let $D$ and $E$ be curves on a non-singular surface $X$. Assume that $D$ has nodes as the only singularities, that $D$ and $E$ intersect transversally
and that for every irreducible component $C$ of $D$ one has $C\cdot C > 2r(C)$ where $C\cdot C$ is the self-intersection of $C$ and $r(C)$ is the number of nodes on $C$. Then $N = \text{Ker }(\pi_1(X \setminus (D \cup E)) \to \pi_1(X\setminus E))$ is finitely generated abelian. Moreover, the centralizer of $N$ in  $\pi_1(X \setminus (D \cup E))$ is of finite index.
 \end{prop}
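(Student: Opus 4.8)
The plan is to analyze the kernel $N$ through the meridians of the irreducible components of $D$, and to show that the positivity hypothesis forces these meridians together with all their conjugates to generate a finitely generated abelian subgroup on which $\pi_1(X\setminus(D\cup E))$ acts through a finite quotient.

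\textbf{Step 1 (Reduction to meridians).} First I would record the standard consequence of the van Kampen theorem that the inclusion-induced homomorphism $\pi_1(X\setminus(D\cup E))\to\pi_1(X\setminus E)$ is surjective and that its kernel $N$ is the normal closure of the meridians $\mu_1,\dots,\mu_s$ of the irreducible components $C_1,\dots,C_s$ of $D$, a meridian being the class of a small positively oriented loop around a smooth point of $C_i$. Thus $N$ is generated as a group by the $\pi_1$-conjugates of the $\mu_i$. I would also note the formal remark that once $N$ is known to be abelian with $C_{\pi_1}(N)$ of finite index, it is automatically finitely generated: since $C_{\pi_1}(N)\subseteq C_{\pi_1}(\mu_i)$, the conjugacy class of each $\mu_i$ has size at most $[\pi_1:C_{\pi_1}(N)]$, so $N$ is generated by finitely many elements. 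This reduces the proposition to the two assertions that $N$ is abelian and that $C_{\pi_1}(N)$ has finite index.

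\textbf{Step 2 (Local relations).} Next I would extract the commutation relations coming from the double points of $D\cup E$. At any transverse crossing of two smooth local branches the complement is locally $(\mathbb C^*)^2$, so the two corresponding meridians commute. Applied to the three types of double point, this gives: (i) at a node of $C_i$, two $\pi_1$-conjugates of $\mu_i$ commute; (ii) at a transverse crossing of $C_i$ and $C_j$, a conjugate of $\mu_i$ commutes with a conjugate of $\mu_j$; (iii) at a transverse crossing of $C_i$ with $E$, a conjugate of $\mu_i$ commutes with a meridian of $E$, which after projection constrains how $\pi_1(X\setminus E)$ conjugates $\mu_i$.

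\textbf{Step 3 (Positivity via a Lefschetz pencil).} The heart of the argument, and the step I expect to be the main obstacle, is to promote the finitely many local relations of Step 2 to the global statement that all conjugates of a fixed $\mu_i$ commute. Fixing a component $C=C_i$, I would choose a base-point-free pencil (obtained after blowing up, which does not change the fundamental groups) giving a fibration $X'\to\mathbb P^1$ whose general fiber $F$ meets $D\cup E$ transversally. By the Zariski–Lefschetz hyperplane theorem, $\pi_1(F\setminus(F\cap(D\cup E)))$ surjects onto $\pi_1(X\setminus(D\cup E))$, and the $C\cdot F$ points of $F\cap C$ supply a full family of conjugates $x_1,\dots,x_{C\cdot F}$ of $\mu$. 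As the fiber varies, the braid monodromy permutes and twists the $x_k$; the special fibers split into the tangency fibers, where $C\to\mathbb P^1$ ramifies, and the $r(C)$ fibers through a node of $C$. The tangency relations force identifications and commutations among the $x_k$, while the node relations of Step 2(i) contribute the opposing count $2r(C)$. I would then carry out the Riemann–Hurwitz bookkeeping expressing the number of tangency relations in terms of $C\cdot C$ and show that the strict inequality $C\cdot C>2r(C)$ leaves enough tangency relations to collapse all the $x_k$ into a single abelian subgroup and, simultaneously, to force the braid monodromy image to be finite. This comparison between the positive self-intersection and the number of nodes is the technical core where the hypothesis is genuinely used.

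\textbf{Step 4 (Conclusion).} Finally I would assemble the pieces. Step 3 applied to each component, together with the inter-component relations (ii), shows that all conjugates of all the $\mu_i$ pairwise commute, so $N$ is abelian; the finiteness of the monodromy image together with relations (iii) shows that $\pi_1(X\setminus(D\cup E))$ conjugates $N$ through a finite quotient, i.e.\ $C_{\pi_1}(N)$ has finite index; and the formal remark of Step 1 then yields that $N$ is finitely generated, completing the proof.
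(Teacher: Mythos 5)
The paper offers no proof of this proposition at all: it is quoted verbatim as Proposition 3.27 of Nori \cite{nori1983zariski} and used as a black box, so your attempt can only be measured against Nori's published argument. Your Steps 1 and 2 are correct and coincide with the standard setup there: $N$ is normally generated by the meridians (van Kampen), transverse double points give local $(\mathbb{C}^*)^2$ commutation relations, and finite generation does follow formally once the centralizer is known to have finite index (each meridian then has a finite conjugacy class).

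The genuine gap is Step 3, and it is not a small one. What you sketch — a pencil, the Zariski--Lefschetz surjection from a generic fiber, and ``Riemann--Hurwitz bookkeeping'' weighing tangency relations against the $2r(C)$ node relations — is the classical Zariski--van Kampen pencil strategy, and the decisive difficulty is exactly the one you defer: a tangency fiber imposes a relation of the form $x_i = w\,x_j\,w^{-1}$ in which the conjugating word $w$ is dictated by the braid monodromy, and counting \emph{how many} such relations exist (which is all that Riemann--Hurwitz provides) gives no control over the words $w$; it is precisely this lack of control that prevented the pencil approach from ever yielding the nodal-curve theorem, which was finally proved by Fulton and Deligne, and generalized by Nori, by entirely different means. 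Your subsidiary claim that the inequality forces ``the braid monodromy image to be finite'' is unjustified and false in general (already for a smooth plane curve of degree $\geq 3$ the braid monodromy group is infinite while $\pi_1$ of the complement is finite cyclic). In Nori's actual proof the hypothesis $C\cdot C > 2r(C)$ enters through intersection theory on covering spaces, not through fiber counting: the normalization of $C$ is an immersed curve whose normal bundle has degree $C\cdot C - 2r(C)$, so in any covering of (a blow-up of) $X$ in which some nodes of $C$ become separated, the closure of a lift of $C$ is a complete curve of self-intersection at least $C\cdot C - 2r(C) > 0$; by Hodge-index-type positivity two distinct complete curves of positive self-intersection must meet, and at such a meeting point the local relation of your Step 2 forces the corresponding conjugates of the meridian to commute. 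It is this positivity-in-coverings mechanism that converts the numerical hypothesis into global commutativity of $N$ and into the finite-index statement for the centralizer; without it, your Step 3 is a restatement of the goal rather than an argument.
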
 

\begin{cor}
\label{cornew}
    Under the condition of Proposition \ref{NormalCrossing}, the fundamental group $\pi_1(X \setminus (D \cup E))$ is virtually abelian if $\pi_1(X\setminus E))$ is cyclic.
\end{cor}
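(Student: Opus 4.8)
The plan is to exploit the short exact sequence that the hypotheses of Proposition \ref{NormalCrossing} provide, together with the finite-index centralizer conclusion, to reduce the statement to an elementary fact about central extensions of cyclic groups.

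First I would record the short exact sequence. Write $G = \pi_1(X \setminus (D \cup E))$ and $Q = \pi_1(X \setminus E)$. The inclusion $X \setminus (D \cup E) \hookrightarrow X \setminus E$ re-adds the set $D \setminus E$, which has real codimension two in $X \setminus E$; hence the induced homomorphism $G \to Q$ is surjective, and by definition its kernel is the subgroup $N$ of Proposition \ref{NormalCrossing}. This yields
\[
1 \longrightarrow N \longrightarrow G \longrightarrow Q \longrightarrow 1,
\]
where $N$ is finitely generated abelian and, by hypothesis, $Q$ is cyclic.

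Next I would pass to the centralizer $Z := Z_G(N)$ of $N$ in $G$, which is of finite index in $G$ by the ``moreover'' clause of Proposition \ref{NormalCrossing}. Since $N$ is abelian it lies in its own centralizer, so $N \le Z$; and by the very definition of $Z$ every element of $N$ commutes with every element of $Z$, i.e.\ $N$ is central in $Z$. Because $N \le Z$, the image of $Z$ in $Q$ is exactly $Z/N$, which is cyclic as a subgroup of the cyclic group $Q$. Thus $Z$ is a central extension of a cyclic group by the abelian group $N$.

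Finally, I would observe that such a $Z$ is abelian. Choosing $t \in Z$ whose image generates the cyclic quotient $Z/N$, every element of $Z$ has the form $t^{k} n$ with $n \in N$; since $N$ is central, $(t^{a} n_1)(t^{b} n_2) = t^{a+b} n_1 n_2 = (t^{b} n_2)(t^{a} n_1)$, so $Z$ is abelian. As $Z$ has finite index in $G$, the group $G = \pi_1(X \setminus (D \cup E))$ is virtually abelian. The only point requiring care is the surjectivity of $G \to Q$, which is the standard codimension-two transversality argument; everything else is formal once Proposition \ref{NormalCrossing} is in hand, so I do not anticipate any serious obstacle.
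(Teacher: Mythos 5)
Your proof is correct, and it rests on the same two ingredients as the paper's: the exact sequence $1 \to N \to G \to Q \to 1$ supplied by Proposition \ref{NormalCrossing} and the finite-index centralizer. The execution, however, differs in a way worth recording. The paper lifts a generator $a$ of the cyclic group $Q=\pi_1(X\setminus E)$ to an element $t\in G$, asserts $G\cong N\rtimes\langle t\rangle$, picks $n$ with $t^n$ in the centralizer of $N$, and exhibits $N\langle t^n\rangle$ as an explicit abelian subgroup of finite index. You instead work with the centralizer $Z=Z_G(N)$ itself: since $N$ is abelian, $N\le Z$ and $N$ is central in $Z$; since $Z/N$ embeds in the cyclic group $Q$, the standard ``central-by-cyclic implies abelian'' argument shows $Z$ is abelian, and $Z$ already has finite index by the ``moreover'' clause. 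Your route is marginally cleaner on one point: the paper's splitting claim $G\cong N\rtimes\langle t\rangle$ is not literally justified when $Q$ is finite cyclic (the lift $t$ may then satisfy $\langle t\rangle\cap N\neq 1$); the paper's conclusion is unharmed because only the subgroup product $N\langle t^n\rangle$ is actually needed, but your argument sidesteps the issue entirely by never invoking a splitting. Conversely, the paper's version has the small virtue of naming a concrete finite-index abelian subgroup rather than appealing to the abstract centralizer. You were also right to flag surjectivity of $G\to Q$ as the one topological input; it follows, as you say, from the fact that $D\setminus E$ has real codimension two in $X\setminus E$, and it is implicitly assumed in the paper's use of Nori's proposition as well.
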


\begin{proof}
    Suppose that $\pi_1(X\setminus E))$ is generated by an element $a$. Choose a preimage $t\in \pi_1(X \setminus (D \cup E))$ of $a$. Proposition \ref{NormalCrossing} implies that the fundamental group $\pi_1(X \setminus (D \cup E))$ is isomorphic to the semi-direct product $N \ltimes \langle t \rangle.$ Since the centralizer of $N$ is of finite index, there is an integer $n$ such that $t^n$ lies in the centralizer. This implies that $N \times \langle t^n \rangle$ is an abelian subgroup of finite index. 
\end{proof}

In the following, we will give a rough idea of our strategy to apply Proposition \ref{NormalCrossing}:

When we have an irreducible curve $C$ and a curve $L$, which is not necessarily irreducible. Suppose they don't transverse with each other. Our strategy will be taking a resolution consisting of a series of blow-ups $\pi:X\to \mathbb CP^2$, at non-transversal intersections and at the singularities of the union of the curves (it will generate a series of exceptional divisors $E_1,..., E_n$), so that the strict transform under $\pi$, $D:=\tilde C$ is smooth and transverses the union of the strict transform of $L$, say $E_0$ and exceptional divisors, $E:=\cup_{i=0}^n E_i$. Observe that $X\setminus E$ is equal to removing $L$ and a finite union of (singular) points from $C$. Moreover, since we will only blow up at the $C,L$ and their strict transforms, $X\setminus (D\cup E)=\mathbb CP^2\setminus (C\cup L)$.
 
 In particular, if $L$ is a line, then we have $X\setminus E=\mathbb C^2\setminus \{p_1,...,p_m\}$, which is simply connected.  Now by Proposition \ref{NormalCrossing}, $\pi_1(\mathbb CP^2\setminus (C\cup L))=\pi_1(X \setminus (D \cup E))$ is abelian.

Now we give a concrete example for this process:
\begin{exam}\label{example1}
When $C$ is a cuspidal cubic curve (with a cusp at $q\in C$), $L$ is a line tangent (with multiplicity $2$) to $C$ at a smooth point $p$ of $C$. We need to blow up three times to get all curves transverse, see Figure \ref{fig:example1}.

For the first step, we need to first blow up at $p$ and $q$. In this step, we get two exceptional divisors $E_1,E_2$. The strict transforms $\tilde L,\tilde C$ and the exceptional divisor $E_1$ form a triple point $\tilde p$ at their intersection. $E_2$ is tangent to $\tilde C$ at $q$ (the tangent point $q$ is a singularity of the union of the curves, i.e., a tacnode). 

For the second step, we need to blow up at $\tilde p,\tilde q$. Thus, we get two exceptional divisors $E_3 ,E_4$ and a triple point $\tilde q'$, which is the only singularity. 

For the third step, we blow up at $\tilde q'$, to get one more exceptional divisor $E_5$. Now all the curves are transverse to each other.

Now $X$ is the total transform of $\mathbb CP^2$ after the three steps and $D:=\tilde C, E:=\tilde L\cup_{i=1}^5E_i$. Observe $D,E$ intersects transversely, 
$D$ has only one irreducible component, and is smooth. $D\cdot D=C\cdot C-4-1-1-1-1=1>2r(D)=0$ (recall blow up at a singularity (resp. smooth) point of a cubic/conic curve, the self-intersection number would decrease $4$ (resp. $1$). So we can apply Proposition \ref{NormalCrossing}. There are two blowup points from $C$, i.e., $ p$ and $ q$. Note $p\in L$, So $X\setminus E=\mathbb CP^2\setminus (L\cup\{q\})\cong \mathbb C^2\setminus \{q\}$, which is simply-connected. Thus Proposition \ref{NormalCrossing} tells us that $\pi_1(X\setminus (D\cup E))$ is abelian. Observe $X\setminus (D\cup E)=\mathbb CP^2\setminus (C\cup L)$, so $\pi_1(\mathbb CP^2\setminus (C\cup L))$ is abelian.
  
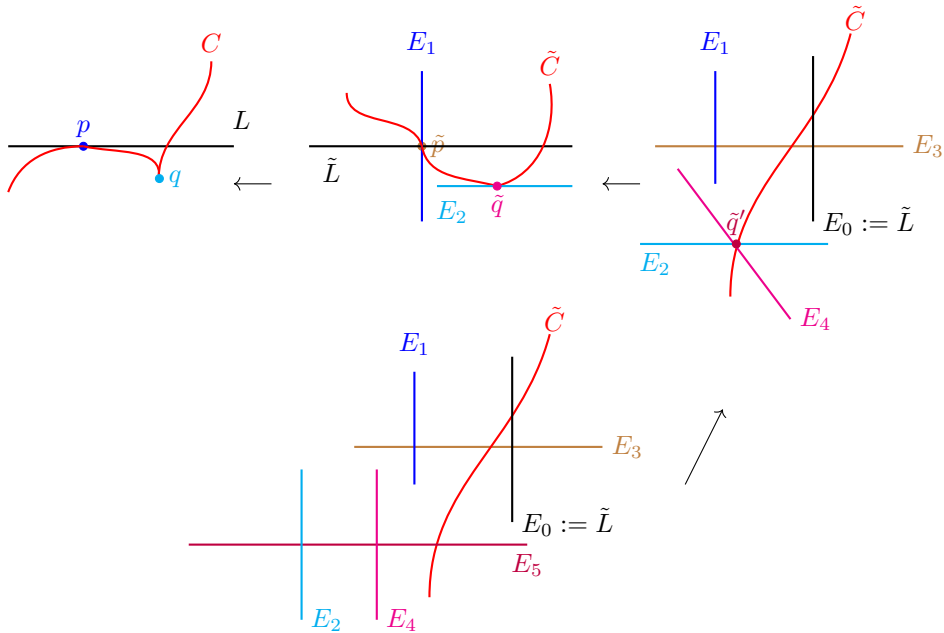
\begin{figure}[h]
    \centering
	\begin{tikzpicture}
		
		\draw[black, thick] (3,0)   to (6,0);
		\filldraw[blue] (4,0) circle (1.5pt) node[above] {$p$};
		\filldraw[black] (6.1,0.1) circle (0pt) node[above] {$L$};
		\filldraw[red] (5.7,1.13) circle (0pt) node[above] {$C$};

		\draw [red,thick](3,-0.61) to[out=70,in=180] (4,0);
        \filldraw[brown] (8.5,0) circle (1.5pt) node[right] {$\tilde p$};
        \draw [red,thick](5,-0.43) to[out=90,in=-90] (5.7,1.13);
        \draw [red,thick](4,0) to[out=-13,in=80] (5,-0.43);
		\draw [<-] (6,-0.5) to (6.5,-0.5);
        \filldraw[cyan] (5.01,-0.43) circle (1.5pt) node[right] {$q$};

		
        
		\draw[black, thick] (7,0)   to (10.5,0);
		\draw [blue, thick] (8.5,-1) to (8.5,1);
        \draw [red,thick](7.5,0.71) to[out=-90,in=105] (8.5,0);
        \draw [red,thick](8.5,0) to[out=-80,in=165] (9.5,-0.53);
        \draw [red,thick](9.5,-0.53) to[out=15,in=-80] (10.2,0.83);
        \draw [cyan, thick] (8.7,-0.53) to (10.5,-0.53);
        \filldraw[magenta] (9.5,-0.53) circle (1.5pt) node[below] {$\tilde q$};
        \filldraw[cyan] (8.9,-0.6) circle (0pt) node[below] {$E_2$};
		
		\filldraw[black] (7.3,-0.6) circle (0pt) node[above] {$\tilde L$};
		\filldraw[red] (10.2,0.83) circle (0pt) node[above] {$\tilde C$};
		\filldraw[blue] (8.5,1.1) circle (0pt) node[above] {$E_1$};
        \draw [<-] (10.9,-0.5) to (11.4,-0.5);

        \draw[brown, thick] (11.6,0)   to (14.9,0);
        \filldraw[brown] (14.9,0) circle (0pt) node[right] {$E_3$};
	\draw [blue, thick] (12.4,-0.5) to (12.4,1);
    \filldraw[blue] (12.4,1.1) circle (0pt) node[above] {$E_1$};
        \draw [red,thick](12.6,-2) to[out=90,in=-105] (14.2,1.5);
        \filldraw[red] (14,1.7) circle (0pt) node[right] {$\tilde C$};
        \draw [black, thick] (13.7,-1) to (13.7,1.2);
        \filldraw[black] (13.7,-1) circle (0pt) node[right] {$E_0:=\tilde L$};
        \draw [cyan, thick] (11.4,-1.3) to (13.9,-1.3);
        \filldraw[cyan] (11.6,-1.3) circle (0pt) node[below] {$E_2$};
        \draw [magenta, thick] (11.9,-0.3) to (13.4,-2.3);
        \filldraw[magenta] (13.4,-2.3) circle (0pt) node[right] {$E_4$};
        \filldraw[purple] (12.68,-1.3) circle (1.5pt) node[above] {$\tilde q'$};

        \draw [<-] (12.5,-3.5) to (12,-4.5);
        \draw[brown, thick] (7.6,-2-2)   to (10.9,-2-2);
        \filldraw[brown] (10.9,-2-2) circle (0pt) node[right] {$E_3$};
	\draw [blue, thick] (8.4,-2.5-2) to (8.4,-1-2);
    \filldraw[blue] (8.4,1.1-2-2) circle (0pt) node[above] {$E_1$};
        \draw [red,thick](8.6,-4-2) to[out=90,in=-105] (10.2,1.5-2-2);
        \filldraw[red] (10,1.7-2-2) circle (0pt) node[right] {$\tilde C$};
        \draw [black, thick] (9.7,-1-2-2) to (9.7,1.2-2-2);
        \filldraw[black] (9.7,-3-2) circle (0pt) node[right] {$E_0:=\tilde L$};
        \draw [purple, thick] (5.4,-3.3-2) to (9.9,-3.3-2);
        \filldraw[purple] (9.9,-3.3-2) circle (0pt) node[below] {$E_5$};
        \draw [magenta, thick] (7.9,-2.3-2) to (7.9,-4.3-2);
        \filldraw[magenta] (7.9,-4.3-2) circle (0pt) node[right] {$E_4$};
        \draw [cyan, thick] (6.9,-2.3-2) to (6.9,-4.3-2);
        \filldraw[cyan] (6.9,-4.3-2) circle (0pt) node[right] {$E_2$};          
	
	\end{tikzpicture}
\caption{The blowup process of Example \ref{example1}.}    \label{fig:example1}
\end{figure}
\end{exam}   

Moreover, if we know a fundamental group is abelian, we can easily get the fundamental group by the following well-known fact (cf.\cite[Corollary 2.8]{cogolludo2011braid}):

\begin{prop}\label{AbelianFund}
    Let $C=C_1\cup C_2\cup...\cup C_r$ be the decomposition of $C$ into its irreducible components. The abelianization $\pi_1(\mathbb CP^2\setminus C)_{ab}=\mathbb Z^{r-1}\oplus \mathbb Z/\tau\mathbb Z$, where $\tau$ is the greatest common divisor of $\text{deg}(C_1),...,\text{deg}(C_r)$.
\end{prop}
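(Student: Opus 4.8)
The plan is to pass to homology. Since the abelianization of $\pi_1$ equals the first integral homology, it suffices to compute $H_1(\mathbb CP^2\setminus C;\mathbb Z)$. Write $d_i=\deg(C_i)$, $U=\mathbb CP^2\setminus C$, and $\tau=\gcd(d_1,\dots,d_r)$. I would first reduce everything to the algebraic statement that $H_1(U)$ is the cokernel of the homomorphism $\mathbb Z\to\mathbb Z^r$, $1\mapsto(d_1,\dots,d_r)$, and then read off the stated group from that cokernel.

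To obtain this cokernel description I would invoke the long exact sequence of the pair $(\mathbb CP^2,U)$,
\[
H_2(\mathbb CP^2)\xrightarrow{\ f\ }H_2(\mathbb CP^2,U)\to H_1(U)\to H_1(\mathbb CP^2)=0,
\]
which already yields $H_1(U)\cong\operatorname{coker}(f)$. By Lefschetz--Alexander duality in the closed oriented $4$-manifold $\mathbb CP^2$ one has $H_2(\mathbb CP^2,U)\cong H^2(C)$; since $C$ is a reduced projective curve with components $C_1,\dots,C_r$, its normalization is a disjoint union of smooth curves and collapsing finitely many points keeps $H_1(C)$ free, so $H^2(C)$ is free of rank $r$ with basis dual to the fundamental classes $[C_1],\dots,[C_r]\in H_2(C)$. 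Under these identifications $H_2(\mathbb CP^2)=\mathbb Z\langle[L]\rangle$ for a line $L$, and $f$ is the intersection-number map $[L]\mapsto\big([C_i]\mapsto[L]\cdot[C_i]\big)$. By B\'ezout $[L]\cdot[C_i]=d_i$, whence $f(1)=(d_1,\dots,d_r)$ and $H_1(U)\cong\mathbb Z^r/\langle(d_1,\dots,d_r)\rangle$. The same relation is geometrically transparent: $H_1(U)$ is generated by the meridians $\gamma_1,\dots,\gamma_r$ of the components, and a generic line meets $C$ transversally in $\sum_i d_i$ points, which exhibits the relation $\sum_i d_i\gamma_i=0$.

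It then remains to identify this cokernel, which is a routine Smith-normal-form computation. Writing $d_i=\tau c_i$, the vector $(c_1,\dots,c_r)$ is primitive and therefore extends to a $\mathbb Z$-basis of $\mathbb Z^r$; in such a basis the single relation reads $\tau\cdot e=0$ on one basis vector, giving $H_1(U)\cong\mathbb Z^{r-1}\oplus\mathbb Z/\tau\mathbb Z$ (and correctly reducing to $\mathbb Z/\tau\mathbb Z$ when $r=1$). I expect the genuine content to lie not in this final algebra but in the middle step: justifying the duality identification $H_2(\mathbb CP^2,U)\cong\mathbb Z^r$ and, above all, checking that the connecting homomorphism $f$ is given by the intersection pairing when $C$ is singular --- equivalently, that pushing a generic line to meet $C$ transversally realizes exactly the relation $\sum_i d_i\gamma_i=0$ and no further relations.
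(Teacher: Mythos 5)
Your proof is correct. Note that the paper itself offers no argument for this proposition: it is quoted as a well-known fact with a citation to \cite[Corollary 2.8]{cogolludo2011braid}, so there is no internal proof to compare against; your duality computation is essentially the standard argument given in that reference (and in Dimca's book on singularities of hypersurfaces). The step you rightly flag as the real content --- that under Alexander--Lefschetz duality the connecting map $H_2(\mathbb{CP}^2)\to H_2(\mathbb{CP}^2,U)\cong H^2(C)$ becomes restriction composed with Poincar\'e duality, hence is recorded by the intersection numbers $[L]\cdot[C_i]=d_i$ --- is exactly the naturality of duality for the pair $C\subset\mathbb{CP}^2$, and is standard; equivalently, the meridian picture you sketch (generators $\gamma_1,\dots,\gamma_r$, one for each component, subject to the single relation $\sum_i d_i\gamma_i=0$ coming from a generic line section) is how the cited source presents the result, via the Zariski--van Kampen presentation, so the two routes corroborate each other. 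The final Smith-normal-form step and the $r=1$ sanity check ($\mathbb{Z}/d$ for an irreducible curve of degree $d$) are also correct.
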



In the following, we list the fundamental groups of all combinatorial types with proofs. We will frequently use the basic fact that a degree-$d$ algebraic curve $C$ has at most $\frac{(d-1)(d-2)}{2}$ singular points. This follows from the Noether's degree-genus formula $g=\frac{(d-1)(d-2)}{2} - \sum \delta_p \geq 0$ (cf. \cite{Kirwan}[Theorem 7.37, page 217] ). In particular, a degree-2 (irreducible) algebraic curve has no singularity. A degree-3 algebraic curve has at most one singularity, while a degree-4 curve has at most 3 singular points.\\


\noindent (1) $C=4C_1$ (a union of $4$ lines).

(1.1) If $4$ lines are in general position (i.e., all lines transverse, no triple point), then it's a nodal curve. It's well-known that the fundamental group of a nodal plane curve is abelian (cf. \cite{fulton1980fundamental}). Now by Proposition \ref{AbelianFund}, $\pi_1(\mathbb CP^2\setminus \cup_{i=1}^4L_i)=\mathbb Z^3$.

(1.2) If there is one triple point (i.e. $3$ lines are concurrent), $\pi_1(\mathbb CP^2\setminus \cup_{i=1}^4L_i)=F_2\times \mathbb Z$ (if we project at the triple point, then we get a trivial fibration over $\mathbb CP^1$ with $3$ punctures ($3$ punctures correspond to the $3$ concurrent lines) and the fibers are $\mathbb CP^1$ with $2$ punctures($2$ punctures correspond to the intersection with the concurrent intersection and the intersection with the rest line).

(1.3) If $4$ lines are concurrent, $\pi_1(\mathbb CP^2\setminus \cup_{i=1}^4L_i)=F_3$. Indeed, projecting from a point outside the lines, we see that there is only one singular fiber. So there is no monodromy relation, as there is only one singular fiber. Then by Zariski-Van Kampen theorem (cf.\cite[Theorem 2.6]{cogolludo2011braid}), $\pi_1(\mathbb CP^2\setminus \cup_{i=1}^4L_i)=\langle g_1,g_2,g_3,g_4|g_4g_3g_2g_1=1\rangle=F_3$.\\

\noindent(2) $C=C_3\cup C_1$ (a union of a cubic curve and a line).
Singularity type of $C_3$ is either $A_1$ (node) or $A_2$ (cusp). We consider the position of $C_3$ and $C_1$. The Bezout theorem implies that the intersection number of $C_1$ and $C_2$ is three, counting multiplicity.  

(2.1) $C_3$ is smooth.

(2.1.1) If $C_3$ and $C_1$ are transverse, the union $C_3 \cup C_1$ a nodal curve, implying that $\pi_1(\mathbb CP^2\setminus (C_3\cup C_1))$ is abelian (cf. \cite[Theorem 2.15]{cogolludo2011braid}). It follows from Proposition \ref{AbelianFund}, $\pi_1(\mathbb CP^2\setminus C_3\cup C_1)=\mathbb Z$.

(2.1.2) If $C_3$ and $C_1$ have a tangency at multiplicity $3$ (inflectional tangency), then by a similar analysis as the Example \ref{example1}, we blow up at the tangent point of $C_3$ and $C_1$ $3$ times to make the total transform a nodal curve. Observe that $\tilde C_3\cdot\tilde C_3=9-1-1-1=6>0$. By Proposition \ref{NormalCrossing},
$\pi_1(\mathbb CP^2\setminus (C_3\cup C_1))$ is abelian, and Proposition \ref{AbelianFund} implies
$\pi_1(\mathbb CP^2\setminus (C_3\cup C_1))=\mathbb Z$.


(2.1.3) If $C_3$ and $C_1$ have two intersection points, then by a similar analysis as the Example \ref{example1}, we blow up at the tangent point of $C_3$ and $C_1$ twice to make the total transform a nodal curve. Observe that $\tilde C_3\cdot\tilde C_3=9-1-1=7>0$. By Proposition \ref{NormalCrossing},
$\pi_1(\mathbb CP^2\setminus (C_3\cup C_1))$ is abelian, and Proposition \ref{AbelianFund} implies
$\pi_1(\mathbb CP^2\setminus (C_3\cup C_1))=\mathbb Z$.




(2.2) $C_3$ is nodal.

(2.2.1) If $C_3$ and $C_1$ intersect transversely, by a similar analysis using Proposition \ref{AbelianFund} as the smooth case, we have $\pi_1(\mathbb CP^2\setminus C_3))=\mathbb Z$.

(2.2.2) If $C_3$ and $C_1$ intersect tangentially at a smooth point $p$ of $C_3$ and transversely at another point,  we blow up at $p$ twice. After we blow up $C:=C_3\cup C_1$ at the tangent point $p$ twice, we get that the strict transform of $C_3$ (saying $\tilde C_3$), the strict transform of $C_1$ (saying $E_0$) and the 2 exceptional divisors $\{E_i\}_{i=1}^2$, intersect transversely to each other (normal crossing). Moreover, if we define $X$ to be the resulting manifold obtained from $\mathbb CP^2$ by the 2 times blow ups, then $X\setminus (\cup_{i=0}^2 E_i\cup \tilde C_3)=\mathbb CP^2\setminus (C_3\cup C_1)$ and $X\setminus (\cup_{i=0}^2 E_i)=\mathbb CP^2\setminus C_1$, which is homeomorphic to $\mathbb C^2$, thus simpy-connected. Now we define $D$ to be $\tilde C_3$, which has only one irreducible component and has a unique singularity, i.e., the node, and define $E$ to be $\cup_{i=0}^2 E_i$. Observe that $\tilde C_3\cdot \tilde C_3=7>2r(\tilde C_3)=2$. It follows from Proposition \ref{NormalCrossing}, $\pi_1(\mathbb CP^2\setminus( C_3\cup C_1))$ is abelian. By Proposition \ref{AbelianFund}, $\pi_1(\mathbb CP^2\setminus( C_3\cup C_1))=\mathbb Z$.

(2.2.3) If $C_3$ and $C_1$ have a tangency (at a smooth point of $C_3$) with multiplicity $3$ (inflectional tangency), we can apply the same method as the last case to blow up $C:=C_3\cup C_1$ at the tangent point $p$ (which is smooth at $C_1$ and $C_3$) three times to make the total transform to have normal crossings. Using the same notation, we have $X\setminus (\cup_{i=0}^3 E_i)=\mathbb CP^2\setminus C_1$, which is homeomorphic to $\mathbb C^2$, thus simpy-connected. Observe that $\tilde C_3\cdot \tilde C_3=6>2r(\tilde C_3)=2$. It follows from Proposition \ref{NormalCrossing}, $\pi_1(\mathbb CP^2\setminus( C_3\cup C_1))$ is abelian. By Proposition \ref{AbelianFund}, $\pi_1(\mathbb CP^2\setminus( C_3\cup C_1))=\mathbb Z$.

(2.2.4) If $C_3$ and $C_1$ intersect at the node with multiplicity $2$,  we can blow up at the node once so that all the curves are transverse to each other. The strict transform $\tilde C_3$ of $C_3$  is irreducible and smooth and has self-intersection $3^2-4=5>0$. We apply the method from Example \ref{example1}, $D:=\tilde C_3$, $E$ is defined to be the union of the strict transform of $C_1$ and the exceptional divisor. By Proposition \ref{NormalCrossing} and Proposition \ref{AbelianFund}, it follows that $\pi_1(\mathbb CP^2\setminus( C_3\cup C_1))=\mathbb Z$.

(2.2.5) If $C_3$ and $C_1$ intersect at the node with multiplicity $3$, it means $C_1$ is tangent to one branch of $C_3$ at the node. In this case, we need to blow up at the node twice to get the normal crossing condition (unlike the last case, after the first blow up, we have a triple point, so we need to blow up again). Then the strict transform of $C_3$, $\tilde C_3$ is irreducible and smooth and has self-intersection $3^2-4-1=4>0$. We apply the method from Example \ref{example1}, with $D:=\tilde C_3$, $E$ is defined to be the union of the strict transform of $C_1$ and exceptional divisors. By Proposition \ref{NormalCrossing} and Proposition \ref{AbelianFund}, it follows that $\pi_1(\mathbb CP^2\setminus( C_3\cup C_1))=\mathbb Z$.



(2.3) $C_3$ is cuspidal.

(2.3.1) If $C_3$ and $C_1$ are transversal, then by a similar analysis as the Example \ref{example1}, we blow up at the cusp twice to make the total transform a nodal curve. Observe that $\tilde C_3\cdot\tilde C_3=9-4-1=4>0$. By Proposition \ref{NormalCrossing},
$\pi_1(\mathbb CP^2\setminus (C_3\cup C_1))$ is abelian, and Proposition \ref{AbelianFund} implies
$\pi_1(\mathbb CP^2\setminus (C_3\cup C_1))=\mathbb Z$.

(2.3.2) If $C_3$ and $C_1$ have one tangency of multiplicity $2$ at a smooth point of $C_3$ and transverse at a different point, we blow up $C:=C_3\cup C_1$ at the tangent point $p$ twice, and at the cusp three times, then we get the strict transform of $C_3$, say $\tilde C_3$, the strict transform of $C_1$, say $E_0$ and five exceptional divisors $\{E_i\}_{i=1}^5$, which are transversal to each other. Moreover, if we define $X$ to be the $\mathbb CP^2$ after the five times blow ups, then $X\setminus (\cup_{i=0}^5 E_i\cup \tilde C_3)=\mathbb CP^2\setminus (C_3\cup C_1)$ and $X\setminus (\cup_{i=0}^5\tilde E_i)$ is homeomorphic to $\mathbb C^2$ minus $3$ points, thus simpy-connected. Now we define $D$ to be $\tilde C_3$, which only has one irreducible component and is smooth, and define $E$ to be $\cup_{i=0}^5 E_i$. Observe that $\tilde C_3\cdot \tilde C_3=1>2r(\tilde C_3)=0$ then it follows from Proposition \ref{NormalCrossing}, $\pi_1(\mathbb CP^2\setminus( C_3\cup C_1))$ is abelian. So by Proposition \ref{AbelianFund}, $\pi_1(\mathbb CP^2\setminus( C_3\cup C_1))=\mathbb Z$.

(2.3.3) If $C_3$ and $C_1$ have a tangency at multiplicity $3$ (inflectional tangency) at a smooth point of $C_3$, then we can consider $C_1$ as the line at infinity (e.g. see $C_1=V(z), C_3= V(x^2z-y^3)$), $\mathbb CP^2\setminus( C_3\cup C_1)$ is homeomorphic to $\mathbb C^2\setminus C_3'$, where $C_3'$ is an affine cuspidal cubic, thus by Zariski-Van Kampen theorem $\pi_1(\mathbb CP^2\setminus( C_3\cup C_1))=\pi_1(\mathbb C^2\setminus C_3')=B_3$.

(2.3.4) $C_1$ is a line tangent (with multiplicity $2$) to $C_3$ at a smooth point $p\in C_3$. By Example \ref{example1}, $\pi_1(\mathbb CP^2\setminus( C_3\cup C_1))$ is abelian, and it follows from Proposition \ref{AbelianFund}, that $\pi_1(\mathbb CP^2\setminus( C_3\cup C_1))=\mathbb Z$.


(2.3.5) If $C_3$ and $C_1$ intersect at the cusp with multiplicity $3$, it means $C_1$ is tangent to the 2 branches of $C_3$ at the cusp. In this case, we need to blow up at the cusp 3 times so that the divisors from the total transform are with normal crossings (unlike the nodal case, after the first blow-up, the strict transform $\tilde C_3$ and the exceptional divisor $E$ are tangent, but $\tilde C_1$ and $\tilde C_3$ are transverse). Then the strict transform of $C_3$, $\tilde C_3$ is irreducible and smooth and has self-intersection $3^2-4-1-1= 3>0$. We apply the method from Example \ref{example1}, $D:=\tilde C_3$, $E$ is defined to be the union of the strict transform of $C_1$ and exceptional divisors. By Proposition \ref{NormalCrossing} and Proposition \ref{AbelianFund}, it follows that $\pi_1(\mathbb CP^2\setminus( C_3\cup C_1))=\mathbb Z$.\\


\noindent(3) $C=2C_2$ (a union of two smooth conic curves). Note that a conic curve is irreducible if and only if it's smooth.

(3.1) If the two conics have an intersection of multiplicity $4$, then by \cite[Example 2.20]{cogolludo2011braid}, $\pi_1(\mathbb CP^2\setminus C)=\mathbb Z\ast\mathbb Z/2\mathbb Z$.

(3.2)  If the two conics $C_2^1, C_2^2$ have a tangent point with intersection number 3, we blow up at the intersection point with intersection number 3 three times, to make the normal crossings. By a similar analysis as the case Example \ref{example1} (since we blow up at smooth points three times, the self-intersection number of the strict transform $D$ of $C_2^1$ becomes $4-3=1>0$ and we take $E$ to be the union of the strict transform of $C_2^1$ and the 3 exceptional divisors. We also denote the simply-connected surface after blow-ups of $\mathbb CP^2$ to be $X$), we have the $\text{Ker }((\pi_1(X \setminus (D\cup E))=\pi_1(\mathbb CP^2 \setminus (C_2^1\cup C_2^2)) \to \pi_1(X \setminus E)=\pi_1(\mathbb CP^2\setminus C_2^2))=\mathbb Z/2\mathbb Z)$ is abelian. It follows from Corollary \ref{cornew} that $\pi_1(\mathbb CP^2 \setminus (C_2^1\cup C_2^2))$ is virtually abelian.

(3.3) If the two conics have a unique intersection of multiplicity $2$, with a similar analysis as the last case, $\pi_1(\mathbb CP^2\setminus C)$ is virtually abelian. 

(3.4) If the two conics have two intersections of multiplicity $2$ (i.e., two $A_3$-type singularities in Arnold's notation), we apply \cite[Theorem 5.2]{cogolludo2025quasi}. The theorem implies that if there is a pencil given by $af+bg^2$, where $[a:b]\in \mathbb CP^1$ and $f$ is a smooth conic and $g$ is a line, and if we have two smooth conics $V_1:=V(f), V_2:=V(a_2f+b_2g^2)$, then $\pi_1(CP^2\setminus (V_1\cup V_2)=\mathbb Z\ast(\mathbb Z/2\mathbb Z)$. Recall that for curves of degree $<6$, the fundamental groups are fully determined by the combinatorial type.  Therefore, without losing generality, we may consider a specific case that $f=x^2 + y^2 - z^2$, $g=y$, and we consider two smooth conics: $x^2 + y^2 - z^2$ (corresponding to $[a:b]=[1:0]$) and $x^2 + 2y^2 - z^2$ (corresponding to $[a:b]=[1:1]$). Observe that they are smooth, tangent to each other exactly at two points $[1:0:1]$ and $[-1:0:1]$, and they are both from the pencil $af+bg^2$. By Theorem 5.1 of \cite{cogolludo2025quasi}, $\pi_1(\mathbb CP^2\setminus (V_1\cup V_2)=\mathbb Z\ast(\mathbb Z/2\mathbb Z)$. Moreover, it implies that if we have two smooth conics, $V_1$ and $V_2$, with 2 tangent points, then $\pi_1(\mathbb CP^2\setminus (V_1\cup V_2))=\mathbb Z\ast(\mathbb Z/2\mathbb Z)$.


(3.5) If the two conics $ C_2^1, C_2^2$ have $4$ different intersections, the fundamental group $\pi_1(\mathbb CP^2\setminus C)=\mathbb Z\oplus \mathbb Z/2\mathbb Z$ (indeed, we can deduce that it is abelian for the same reason as case (1.1)). By Proposition \ref{AbelianFund}, it follows that
$\pi_1(\mathbb CP^2\setminus (C_2^1\cup C_2^2))=\mathbb Z\oplus \mathbb Z/2\mathbb Z$.\\

\noindent(4) $C=C_2\cup 2C_1$ (a union of a (smooth) conic and two lines). \\
We denote the intersection of the 2 lines $L_1,L_2$ by $P$.

(4.1) If the 2 lines and the conic are transversal and $P$ is not the intersection with the conic, the curve $C$ is nodal. By Proposition \ref{AbelianFund}, $\pi_1(\mathbb CP^2 \setminus (C_2\cup L_1\cup L_2))=\mathbb Z^2\oplus \mathbb Z/\text{gcd}(\text{deg}(C_2),\text{deg}(L_1),\text{deg}(L_2))\mathbb Z=\mathbb Z^2$.

(4.2) If the 2 lines and the conic are transversal and $P$ is the intersection with the conic, we blow up at $P$ once, to make the curves have normal crossings. Denote the simply-connected surface after blow-up of $\mathbb CP^2$ to be $X$, define $D$ to be the strict transform of the conic, and define $E$ to be the union of the (strict transforms of the) 2 lines and the exceptional divisor. Observe that $D^2=3>0=2r(D)$, where $r(D)$ is the number of nodes on $D$ (note here $D$ is irreducible). Then we can apply Nori's result (Proposition \ref{NormalCrossing}), to get the $\text{Ker }(\pi_1(X \setminus (D\cup E))=\pi_1(\mathbb CP^2 \setminus (C_2\cup L_1\cup L_2)) \to \pi_1(X \setminus E)=\pi_1(\mathbb CP^2 \setminus (L_1\cup L_2))=\mathbb Z)$ is abelian. It follows from Corollary \ref{cornew} that $\pi_1(\mathbb CP^2 \setminus (C_2\cup L_1\cup L_2))$ is virtually abelian.


(4.3) If one line is tangent to the conic, the other transverse, and $P$ is not on the conic, we apply the same method as the last case to blow up at the tangent point twice to make the curves have normal crossings. With the same notations, we have $D^2=2>0=2r(D)$, implying that $\text{Ker }(\pi_1(\mathbb CP^2 \setminus (C_2\cup L_1\cup L_2)) \to \pi_1(\mathbb CP^2 \setminus (L_1\cup L_2))=\mathbb Z)$ is abelian.
It follows from Corollary \ref{cornew} again that $\pi_1(\mathbb CP^2 \setminus (C_2\cup L_1\cup L_2))$ is virtually abelian.


(4.4) If one line is tangent to the conic, the other transverse, and $P$ is on the conic, as in the last case, we just need to blow up at $P$ twice to get the normal crossings condition. For the same reason, the fundamental group $\pi_1(\mathbb CP^2 \setminus (C_2\cup L_1\cup L_2))$ is virtually abelian.

(4.5) If the 2 lines are tangent to the conic and $P$ is not on the conic: In $\mathbb CP^2$, consider all lines through the point $P$. By the fact that the lines through $P$ are parametrized by $\mathbb CP^1$ {(Reason: Choose coordinates so that $P=[0:0:1]$ (since $\text{PGL}(3,\mathbb C)$ acts transitively on $\mathbb CP^2$), any line $ax+by+cz=0$ through $[0:0:1]$ would be actually given by $ax+by=0$. It is easy to see that these lines are parametrized by $[a:b]$)}, so we get a fibration $$\mathbb CP^1\setminus\{P\}\to \mathbb CP^2\setminus \{P\}\to \mathbb CP^1,$$ sending the line each line $V(ax+by)$ to $[a:b]$. Observe that most of the lines in $\mathbb CP^2$ through $P$ intersect $C$ with $3$ points (one at $p$ and two with the conic $K$), except the two tangent lines $C_1,C_1'$ of the conic passing through $P$. So if we remove $C$ from $\mathbb CP^2$, it induces a fibration $$\mathbb CP^1\setminus\{x_1,x_2,P\}\to \mathbb CP^2\setminus C\to \mathbb CP^1\setminus\{y_1,y_2\},$$ where $x_1,x_2$ are corresponding to the intersection of each line with the conic and $y_1,y_2$ are corresponding to the $C_1,C_1'$. Therefore, the fundamental group of $\mathbb CP^2\setminus C$ is the semi-direct product $F_2 \rtimes \mathbb{Z}$, which is known to be linear (actually every $F_2 \rtimes \mathbb{Z}$ group can be represented as a non-positively curved punctured torus bundle over the circle, implying that every $F_2 \rtimes \mathbb{Z}$ group is CAT(0) and virtually
special by the work of Liu \cite{liu}[Theorem 1.1]).


We summarize the results discussed above as the following. 

\begin{theorem}\label{degree4}
  For the reducible quartic case, the only non-abelian cases are $F_3, B_3,\mathbb Z\ast\mathbb Z/2\mathbb Z, F_2\rtimes \mathbb Z$, and virtually abelian.  
\end{theorem}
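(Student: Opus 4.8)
The plan is to reduce the statement to a finite enumeration governed by the way $C$ decomposes into irreducible components. Since $C$ is reduced and reducible of degree four, the degrees of its components give a partition of $4$ into at least two parts, and because a degree-two component must be an irreducible (hence smooth) conic, there are exactly four families: $4C_1$, $C_3\cup C_1$, $2C_2$, and $C_2\cup 2C_1$. These are precisely cases (1)--(4) above, so it suffices to verify that each family has been exhausted and then to collect the resulting groups.

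Within each family the combinatorial type is pinned down by two pieces of data: the singularity type of each component (a conic is smooth; a cubic is smooth, nodal, or cuspidal) and the mutual intersection pattern. I would control the latter using B\'ezout's theorem, which fixes the total intersection multiplicity of any two components as the product of their degrees, together with the degree--genus bound on the number of singular points quoted above. For four lines this leaves only general position, one triple point, or a single quadruple point (two distinct triple points being impossible, since the two lines shared by two triples meet only once); for $C_3\cup C_1$ one runs through the three singularity types of $C_3$ and, for each, the finitely many ways the three intersection points (counted with multiplicity) can collide or land on the singular point; the families $2C_2$ and $C_2\cup 2C_1$ are handled the same way.

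For the computation itself I would split into abelian and non-abelian outcomes. The abelian (or virtually abelian) cases are reached uniformly by the blow-up strategy of Example \ref{example1}: resolve all tangencies and singular points so that the strict transform $D$ of the non-linear component meets the remaining divisor $E$ transversally, verify the numerical hypothesis $D\cdot D>2r(D)$ via the self-intersection lemma, and apply Proposition \ref{NormalCrossing} together with Proposition \ref{AbelianFund} (or Corollary \ref{cornew} when the relevant base group is only cyclic). The genuinely non-abelian outcomes are then isolated: four concurrent lines give $F_3$ by Zariski--Van Kampen (a single singular fibre, hence no monodromy relation); a triple point among four lines and the pencil fibration by lines through the intersection point in case (4.5) give $F_2\times\mathbb Z$ and $F_2\rtimes\mathbb Z$ respectively, the former being the trivial instance of the latter; a cuspidal cubic with an inflectional tangent line gives $B_3$; and two conics tangent at one or two points give $\mathbb Z\ast\mathbb Z/2\mathbb Z$. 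Collecting these shows the only non-abelian groups occurring are $F_3$, $B_3$, $\mathbb Z\ast\mathbb Z/2\mathbb Z$, and $F_2\rtimes\mathbb Z$, with every remaining type virtually abelian.

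The main obstacle is completeness of the enumeration rather than any single computation: one must be certain that no combinatorial type has been overlooked, in particular that every incidence pattern of the components and every way an intersection point can specialize onto a node or cusp has been accounted for, which is exactly where B\'ezout and the singularity bound do the real work. A secondary difficulty is that Nori's criterion fails precisely in the non-abelian cases---either the fibre $X\setminus E$ is not simply connected or the base group is non-cyclic---so these must be treated individually by fibrations or Zariski--Van Kampen, and the two-conic tangent cases additionally rely on the external pencil computation of \cite{cogolludo2025quasi}.
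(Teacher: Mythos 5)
Your proposal follows essentially the same route as the paper: the same four-family decomposition $4C_1$, $C_3\cup C_1$, $2C_2$, $C_2\cup 2C_1$ by component degrees, the same B\'ezout/degree--genus control of the finitely many intersection patterns, the same blow-up-plus-Nori treatment (Proposition \ref{NormalCrossing} with Proposition \ref{AbelianFund} or Corollary \ref{cornew}) of the (virtually) abelian cases, and the same identification of the non-abelian outcomes ($F_3$ from four concurrent lines, $F_2\times\mathbb Z$ and $F_2\rtimes\mathbb Z$ from the pencil fibrations, $B_3$ from the inflectional tangent to a cuspidal cubic, $\mathbb Z\ast\mathbb Z/2\mathbb Z$ from the two-conic cases). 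One small imprecision to fix when writing this up: for two conics, a single simple tangency (with two further transverse points) or a multiplicity-$3$ contact (with one further transverse point) gives only a virtually abelian group; it is exactly the single intersection of multiplicity $4$ and the pair of simple tangencies that produce $\mathbb Z\ast\mathbb Z/2\mathbb Z$, the latter via the pencil result of \cite{cogolludo2025quasi}.
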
Since each of the groups in Theorem \ref{degree4} is linear and virtually polyfree,  we conclude that:

\begin{cor}
    If $C$ is a plane curve of degree $\le 4$, then $\pi_1(\mathbb CP^2\setminus C)$ is linear and virtually polyfree.
\end{cor}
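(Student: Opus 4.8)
The plan is to reduce the statement to a finite check over the list of isomorphism types of $\pi_1(\mathbb CP^2\setminus C)$ produced by the preceding analysis, and then to verify linearity and virtual poly-freeness for each type on that list. First I would collect the possibilities. For $\deg C\le 3$ the group is finitely generated abelian or the free group $\mathbb Z\ast\mathbb Z$. For an irreducible quartic it is finitely generated abelian or the finite group $B_3(S^2)$. For a reducible quartic, Theorem \ref{degree4} shows it is virtually abelian or one of $F_3$, $B_3$, $\mathbb Z\ast\mathbb Z/2\mathbb Z$, $F_2\rtimes\mathbb Z$. Hence it suffices to treat these finitely many group types.

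For linearity I would argue type by type. A finitely generated abelian group is linear; a finitely generated virtually abelian group is virtually polycyclic, hence linear by the Auslander--Swan theorem. Free groups are linear inside $\mathrm{SL}(2,\mathbb Z)$, and $B_3$ is linear by Bigelow \cite{Big} and Krammer \cite{Kra}. The free product $\mathbb Z\ast\mathbb Z/2\mathbb Z$ is finitely generated and virtually free, hence linear; $F_2\rtimes\mathbb Z$ is linear as already recorded in this subsection via Liu \cite{liu}; and $B_3(S^2)$ is finite, hence linear. This settles linearity.

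For virtual poly-freeness I would first record the closure property that lets several cases be handled uniformly: if $K\le H\le G$ with $K$ poly-free and both indices finite, then $K$ has finite index in $G$, so a group that is virtually (virtually poly-free) is itself virtually poly-free. Now $\mathbb Z^n$ is poly-$\mathbb Z$ and hence poly-free, so every (virtually) abelian group on the list is virtually poly-free; free groups are poly-free by definition; and $F_2\rtimes\mathbb Z$ is poly-free via $1\trianglelefteq F_2\trianglelefteq F_2\rtimes\mathbb Z$ with free quotients. The braid group $B_3$ contains the pure braid group $P_3\cong F_2\times\mathbb Z$ with finite index, and $F_2\times\mathbb Z$ is poly-free, so $B_3$ is virtually poly-free. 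Finally $\mathbb Z\ast\mathbb Z/2\mathbb Z$ is virtually free and $B_3(S^2)$ is finite, so both are virtually poly-free. Combining the two verifications over the finite list proves the corollary.

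The genuine content of the result lies upstream, in the geometric classification already carried out; the corollary itself is essentially bookkeeping. The only points that require care are invoking Auslander--Swan (rather than the purely abelian case) to obtain linearity in the virtually abelian cases, and confirming the closure of virtual poly-freeness under finite-index overgroups, which is precisely what folds the virtually abelian and virtually free cases into the uniform argument.
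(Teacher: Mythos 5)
Your proposal is correct and follows essentially the same route as the paper: the corollary is obtained by reducing to the finite list of group types produced by the degree $\le 3$, irreducible quartic, and reducible quartic (Theorem \ref{degree4}) classifications, and then checking linearity and virtual poly-freeness type by type. Your write-up simply makes explicit the standard facts (Auslander--Swan for virtually abelian groups, Bigelow--Krammer for $B_3$, $P_3\cong F_2\times\mathbb Z$, closure under finite-index overgroups) that the paper invokes implicitly with ``each of the groups is linear and virtually polyfree.''
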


\section{Quintic}\label{degree5}
The presentation of fundamental groups of $\mathbb{C}P^2 \setminus\ C$ for degree-5 algebraic curves $C$ has been classified by Degtyarev \cite{Deg}. Based on this classification, we will prove Theorem \ref{main} for these curves by studying the presentations. 
\subsection{Presentation of fundamental groups}
In this subsection, we will review the results proved by Degtyarev \cite{Deg}. Before introducing the list of  Degtyarev, we need some notations.
\begin{enumerate}
\item $F_n$ is the free group of rank $n$, and $B_n$ is the braid group with $n$-strands.
\item $T_{p,q}$ is the fundamental group of a toric link of type $(p, q)$. In particular, if $p = 2$, then
$T_{2,2r}=\langle a,b| (ab)^r=(ba)^r\rangle$; if $\text{gcd}(p,q)=1$ then $T_{p,q}=\langle a,b| a^p=b^q\rangle$.
\item $G(T)$ and $G_p(T)$, where $T\in\mathbb Z[t]$ is an integral polynomial, are the extensions
$$1\to \mathbb Z[t]/T\to G(T)\to \mathbb Z\to 1$$ and
$$1\to (\mathbb Z/{p\mathbb Z})[t]/T\to G(T)\to \mathbb Z\to 1,$$ where the conjugation action of the generator of the quotient $\mathbb Z$ on the kernel is the multiplication by $t$.
\item $\text{Gr}\langle p,q,r \rangle :=\langle a,b,c|a^p=b^q=c^r=abc\rangle$.
\end{enumerate}

Following from Degtyarev's convention, we use Arnol'd's notation for the types of singular points (see \cite{arnold2012singularities}):

\begin{enumerate}
\item A curve is said to be type $aC_p\cup bC_q ...$ if it has $a$ irreducible components of degree $p$, $b$ irreducible of components of degree $q$ and so on. 
\item Let $A_p$ denote a singularity given locally by $x^2+y^{p+1}=0$ and a set of singularities is denoted like $3A_4 \cup 4A_3 \cup ....$ 
\item A curve $C$ is said of type $n_1C_p\cup n_2C_q\cup...$ if it has $n_1$ irreducible components of degree $p$,  $n_2$ irreducible components of degree $q$, etc. 
\item Let $C_d(\Sigma)$, where $\Sigma$ is a list of singularities, denote an irreducible curve of degree $d$ whose set of singular points is $\Sigma$.
\item The mutual position of an irreducible curve $C$ and a line $L$ is denoted by $\times d$, if $L$ meets $C$ with multiplicity $d$ at a non-singular point of $C$. Denoted by $A_p$ if $L$ intersects $C$ transversely at a singular point of $C$ of type $A_p$. Denoted by $A_p^*$ if $L$ is tangent to $C$ at a singular point of $C$ of type $A_p$.
\end{enumerate}

\subsection{Linearity and polyfreeness}
In this subsection, we will prove that the fundamental group of a quintic curve is virtually polyfree and linear.  Our key observation is that most of the non-abelian presentations are either surface groups or triangle Artin groups.

Here we list the quintics with nonabelian fundamental groups from \cite[Section 3.3]{Deg}:

\begin{enumerate}

\item
 Irreducible quintics of
type $C_5(3A_4 )$:  $\Pi=\langle a,b|b=ab^4a,a^2=b^2a^3b^2\rangle$ which is finite with order $320$, of type $C_{5}(A_6\sqcup 3A_2)$: $\Pi =\langle u,v|u^3=v^7=(uv^2)^2\rangle$.
\item The quintics of type $C_4\sqcup C_1$:\\
$\Pi =\langle a,b,c|aba=bab,bcb=cbc,abcb^{-1}a=bcb^{-1}abcb^{-1}\rangle$ (type $C_{4}(3A_{2})\sqcup \{\times 2,\times 2\}$), $B_m$ ($m=3,4)$, {$G_n(t+1)$} ($n=3,5$), $\text{Gr} \langle 2,3,5 \rangle \times \mathbb Z$, $T_{3,4}$.
\item The quintics of type $C_3\sqcup C_2$:\\
$\Pi =\langle a,b|[a^{3},b]=1,ab^{2}=ba^{2}\rangle$.
\item The quintics of type $C_3\sqcup 2C_1$:\\
$\mathbb Z\times B_3$; {$G(t^n-1),(n=2,3)$}; $T_{2,4},T_{2,6}$; {$\Pi=\langle a,b,c|aca=cac,[b,c]=1,(ab)^2=(ba)^2\rangle$} (type $C_3(A_2)\sqcup\{\times 3\}\sqcup\{\times 2,\times 1\}$).
\item The quintics of type $2C_2\sqcup C_1$:\\
$F_2; T_{2,4};\mathbb Z\times B_3$.
\item The quintics of type $C_2\sqcup 3C_1$:\\
$\mathbb Z\times F_2;\mathbb Z\times T_{2,4}$;
{$\Pi=\langle a,b,c|[a,b]=[a,c^{-1}bc]=1,(bc)^2=(cb)^2\rangle$;
$\Pi=\langle a,b,c|(ac)^2=(ca)^2,(ab)^2=(ba)^2,[b,c]=1\rangle$.}
\item The quintics of type $5C_1$.\\
$F_4;\mathbb Z\times F_3;F_2\times F_2;\mathbb Z\times \mathbb Z\times F_2$.
\end{enumerate}

It's well-known that finite groups, free groups, braid groups, the direct product of two linear (resp. polyfree) groups, and virtually linear (resp. polyfree) groups are linear (resp. virtually polyfree). Therefore, it remains to check the linearity and virtual polyfreeness of the following groups.


\begin{enumerate}
\item Type $C_{5}(A_6\sqcup 3A_2)$, whose proof follows from Lemma \ref{C5(A63A2)1} and Lemma \ref{C5(A63A2)2}.
\item Type $C_{4}(3A_{2})\sqcup \{\times 2,\times 2\}$), whose proof follows from Lemma \ref{C4(3A2)22)}.
\item $G_n(t+1)$ ($n=3,5$). This is a finite-by-$\mathbb Z$ group, which is virtually $\mathbb Z$ and thus linear, and virtually polyfree.
\item $\text{Gr} \langle 2,3,5 \rangle$, whose proof follows from Lemma \ref{Gr235}. 
\item $T_{3,4}$, $T_{2,4}$, and $T_{2,6}$, whose proof follows from Lemma \ref{toriclink}.
\item Type $C_3\sqcup C_2$, whose proof follows from Lemma \ref{C3C2}.
\item Type $C_3\sqcup 2C_1$. Observe that $\Pi=\langle a,b,c|aca=cac,[b,c]=1,(ab)^2=(ba)^2\rangle$ is indeed the triangle Artin group $\mathrm{Art}_{234}$,  which is also the Artin group of (finite) type $B_3$. 
It is known that any finite-type Artin group is linear (see \cite{CW}). The group $\mathrm{Art}_{234}$ is isomorphic to the semi-direct product $\mathrm{Art}_{333} \rtimes \mathbb{Z}$ (cf. Charney-Crisp \cite[page 6]{CC}).  Squier \cite{Sq} proved that the affine triangle Artin group $\mathrm{Art}_{333}$ is polyfree, implying that $\mathrm{Art}_{234}$ is polyfree.

\item $G(t^n-1),(n=2,3)$. This is a $\mathbb Z^n$-by-$\mathbb Z$ group. Note that $G(t^n-1)$ is linear since it is polycyclic (cf. Segal \cite[Theorem 5, page 92]{Segal}).
\item $\Pi=\langle a,b,c|[a,b]=[a,c^{-1}bc]=1,(bc)^2=(cb)^2\rangle$, whose proof follows from Lemma \ref{C23C1}.
\item $\Pi=\langle a,b,c|(ac)^2=(ca)^2,(ab)^2=(ba)^2,[b,c]=1\rangle$. This is the Artin group $\mathrm{Art}_{244}$, which is of affine type $\tilde C_2$. It is known to be linear and polyfree as explained in Section 2.2. 
\end{enumerate}

\begin{lemma}\label{C5(A63A2)1}
\lbrack type $C_{5}(A_6\cup 3A_2)$] The group 
\begin{equation*}
\Pi =\langle u,v|u^3=v^7=(uv^2)^2\rangle
\end{equation*}%
is virtually a $\mathbb Z$-central extension of the surface group $\pi_1(S_3)$ for the orientable closed surface $S_3$ of genus $3$.
\end{lemma}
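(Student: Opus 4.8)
The plan is to understand the group
\[
\Pi = \langle u, v \mid u^3 = v^7 = (uv^2)^2 \rangle
\]
as a central extension and then identify the quotient as a surface group (virtually). Set $z := u^3 = v^7 = (uv^2)^2$. My first move is to argue that $z$ is central. The natural guess is that $z$ generates an infinite cyclic central subgroup: since $z = u^3$ commutes with $u$, and $z = v^7$ commutes with $v$, and $u, v$ generate $\Pi$, the element $z$ is central. So I would set $Z := \langle z \rangle$ and check $Z \cong \mathbb{Z}$ (infiniteness should follow once the quotient is identified as infinite).

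Next I would compute the quotient $\bar\Pi := \Pi / Z$. Killing $z$ gives
\[
\bar\Pi = \langle u, v \mid u^3 = v^7 = (uv^2)^2 = 1 \rangle,
\]
which is exactly the presentation $\mathrm{Gr}\langle 3,7,2\rangle$-type relation $u^3 = v^7 = (uv^2)^2 = 1$; this is a triangle-type group, and more precisely a $(2,3,7)$-triangle group presented via $a = u$, $b = uv^2$ (so that $a^3 = b^2 = 1$ and the third generator has order $7$). I would verify, by a Tietze transformation, that $\bar\Pi$ is isomorphic to the ordinary $(2,3,7)$ triangle group $\Delta(2,3,7)$, which is a cocompact Fuchsian group and hence virtually a closed orientable surface group by Fox's theorem / the Selberg lemma. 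The genus bookkeeping should give genus $3$: a torsion-free finite-index subgroup of $\Delta(2,3,7)$ of the correct index has Euler characteristic matching $\chi(S_3) = -4$, and one selects the index so that $\pi_1(S_3)$ appears. Thus $\bar\Pi$ is virtually $\pi_1(S_3)$.

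Combining these, $\Pi$ is a central extension $1 \to \mathbb{Z} \to \Pi \to \bar\Pi \to 1$ with $\bar\Pi$ virtually $\pi_1(S_3)$. Pulling back the central $\mathbb{Z}$ over the finite-index surface-group subgroup $\pi_1(S_3) \le \bar\Pi$ yields a finite-index subgroup of $\Pi$ that is a $\mathbb{Z}$-central extension of $\pi_1(S_3)$, which is precisely the claimed conclusion.

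The hard part will be making the identification of the quotient with the $(2,3,7)$ triangle group rigorous, i.e.\ carrying out the correct change of generators so that the relations $u^3 = (uv^2)^2 = 1$ together with $v^7 = 1$ become the standard triangle presentation $\langle x,y \mid x^2 = y^3 = (xy)^7 = 1\rangle$, and then pinning down the index of the torsion-free surface subgroup so that the genus is exactly $3$ rather than some other value. Verifying centrality of $z$ is routine once the generating relations are used, and the infiniteness of $\langle z\rangle$ follows because $\bar\Pi$ is infinite (a cocompact Fuchsian group) while $\Pi$ must surject onto it, so a collapse of $z$ to finite order would be incompatible with the extension structure; I would confirm this last point by checking $\Pi$ has a quotient in which $z$ has infinite order, e.g.\ via the abelianization or a faithful linear representation coming from the Fuchsian structure.
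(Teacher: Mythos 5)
Your overall route coincides with the paper's: observe that $z=u^3=v^7=(uv^2)^2$ is central, identify the quotient $\Pi/\langle z\rangle$ with the $(2,3,7)$ triangle group via the same change of generators $a=uv^2$, $b=u$ (the relation $(uv^2)^2=1$ gives $ab=uv^2u=v^{-2}$, an element of order $7$), and pull the central $\mathbb{Z}$ back over a finite-index surface subgroup. The genuine gap sits exactly where you flag "the hard part". Selberg's lemma (or Fox's theorem) produces a torsion-free subgroup of \emph{some} finite index; you cannot "select the index so that $\pi_1(S_3)$ appears". A torsion-free subgroup of index $n$ in $\Delta(2,3,7)$ has genus $1+n/84$, so you need a subgroup of index exactly $168$, and nothing in your argument produces one. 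Nor can you repair this afterwards by passing to deeper finite-index subgroups: an index-$k$ subgroup of $\pi_1(S_g)$ is $\pi_1(S_{k(g-1)+1})$, so from a Selberg subgroup of unknown genus $g$ you only reach genera $k(g-1)+1$, which need not include $3$. The paper closes this with a specific classical input (cited to Behr--Mennicke; this is Klein's quartic): $\Delta(2,3,7)$ surjects onto $\mathrm{PSL}(2,\mathbb{Z}/7\mathbb{Z})$, of order $168$, with torsion-free kernel, and Riemann--Hurwitz then forces the kernel to be $\pi_1(S_3)$. Some such explicit Hurwitz-quotient fact is unavoidable if the genus is to be pinned at $3$.

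A second, smaller defect: your proposed check that $\langle z\rangle\cong\mathbb{Z}$ via the abelianization fails. Abelianizing the two relations gives $3u=7v$ and $u=4v$, hence $5v=0$, so $H_1(\Pi)\cong\mathbb{Z}/5\mathbb{Z}$ and every element has finite image there; this detects $z\neq 1$ but cannot detect infinite order. Your first argument ("a collapse of $z$ to finite order would be incompatible with the extension structure") is circular, since the infinitude of the quotient says nothing about the order of a central element. To be fair, the paper also asserts the $\mathbb{Z}$-centrality without proof; the honest justification is that $\Pi$ is the fundamental group of the Brieskorn sphere $\Sigma(2,3,7)$, a Seifert fibration over the $(2,3,7)$-orbifold with nonzero Euler number — equivalently, $\Pi$ embeds in $\widetilde{\mathrm{PSL}}(2,\mathbb{R})$ as the preimage of $\Delta(2,3,7)$, with $z$ the fiber class, which has infinite order. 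Note that $\widetilde{\mathrm{PSL}}(2,\mathbb{R})$ admits no faithful finite-dimensional linear representation, so your fallback phrase "a faithful linear representation coming from the Fuchsian structure" would also need to be reworked.
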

\begin{proof}
Observe that $\Pi$ is a $\mathbb Z$-central extension of $$\langle u,v|u^3=v^7=(uv^2)^2=1\rangle,$$ where the center is generated by $\langle u^3\rangle\subset \Pi$.
Note that $\langle u,v|u^3=v^7=(uv^2)^2=1\rangle$ is indeed the triangle group $\Delta_{(2,3,7)}:=\langle a,b|a^2=b^3=(ab)^7 =1\rangle$ (by the isomorphism $\phi:\Delta_{(2,3,7)}\to\Pi, a\mapsto uv^2,b\mapsto u$). By \cite{behr1968presentation}, $\Delta_{(2,3,7)}$ admits a surjection onto the finite group $\text{PSL}(2,\mathbb Z/7\mathbb Z)$ with kernel $\pi_1(S_3)$. So $\Delta_{(2,3,7)}$ is virtually a surface group. It follows that $\Pi$ is virtually a $\mathbb Z$-central extension of $\pi_1(S_3)$.
\end{proof}

\begin{lemma} \label{C5(A63A2)2}
Let $S_g$ be the closed orientable surface of genus $g$.    Any $\mathbb Z$-central extension of the surface group $\pi_1(S_g)$ is linear.
\end{lemma}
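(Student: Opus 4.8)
The plan is to classify central extensions of a surface group by $\mathbb{Z}$ via group cohomology and then to produce an explicit faithful linear representation in each case. Recall that isomorphism classes of central extensions
\[
1 \to \mathbb{Z} \to G \to \pi_1(S_g) \to 1
\]
are parametrized by $H^2(\pi_1(S_g);\mathbb{Z}) \cong \mathbb{Z}$, where the $\mathbb{Z}$-coefficients carry the trivial action (centrality forces the action to be trivial). So I would first split the proof into the trivial class $0 \in H^2$ and the nonzero classes. For the trivial class, the extension is (up to isomorphism) the direct product $\pi_1(S_g) \times \mathbb{Z}$; since surface groups are linear (they are even Fuchsian, embedding in $\mathrm{PSL}(2,\mathbb{R})$, and lift to $\mathrm{SL}(2,\mathbb{R})$) and $\mathbb{Z}$ is linear, a direct product of linear groups is linear, so this case is immediate.

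For a nonzero class $k \in H^2(\pi_1(S_g);\mathbb{Z}) \cong \mathbb{Z}$, the key geometric input is that the generator of $H^2$ is realized by the unit circle bundle of a line bundle over $S_g$, equivalently by the fundamental group of a Seifert fibered $3$-manifold, or most cleanly by the preimage of $\pi_1(S_g)$ under a central extension of $\mathrm{PSL}(2,\mathbb{R})$. Concretely, I would use that $\pi_1(S_g)$ embeds as a cocompact lattice $\Gamma \le \mathrm{PSL}(2,\mathbb{R})$, and that the universal cover $\widetilde{\mathrm{PSL}(2,\mathbb{R})}$ sits in a central extension $1 \to \mathbb{Z} \to \widetilde{\mathrm{PSL}(2,\mathbb{R})} \to \mathrm{PSL}(2,\mathbb{R}) \to 1$ whose Euler class generates $H^2$. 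Pulling this back along $\Gamma \hookrightarrow \mathrm{PSL}(2,\mathbb{R})$ gives the generator of $H^2(\Gamma;\mathbb{Z})$, so the preimage $\widetilde{\Gamma}$ of $\Gamma$ is a $\mathbb{Z}$-central extension realizing $\pm 1 \in \mathbb{Z}$, and its commensurated multiples realize all classes by passing to the appropriate index-$k$ subgroup of the central $\mathbb{Z}$ (an index-$k$ subgroup of the center changes the extension class by multiplication by $k$). Thus every central extension is commensurable with, and in fact a quotient or subgroup related to, $\widetilde{\Gamma}$.

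The decisive step is then to linearize $\widetilde{\Gamma}$. Here I would invoke that $\widetilde{\mathrm{PSL}(2,\mathbb{R})}$, while not itself a matrix group, contains $\widetilde{\Gamma}$ which is a finitely generated group that \emph{is} linear: one standard route is that a $\mathbb{Z}$-central extension of a surface group is the fundamental group of a closed Seifert fibered $3$-manifold with hyperbolic base orbifold, and such $3$-manifold groups are linear because they are virtually the fundamental group of a circle bundle over $S_g$ that fibers or are residually finite lattices; more directly, one checks that the relevant faithful representation can be built into $\mathrm{SL}(2,\mathbb{R}) \times \mathbb{R}$ or into a connected linear group covering, after noting that the central $\mathbb{Z}$ maps to a unipotent or diagonal one-parameter subgroup. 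I anticipate the main obstacle to be precisely this: $\widetilde{\mathrm{PSL}(2,\mathbb{R})}$ has no faithful finite-dimensional representation, so one cannot simply restrict an ambient linear representation. The honest fix is to build the representation by hand using the explicit $2$-cocycle, mapping the central generator to a nontrivial central element of a linear group (e.g.\ realizing $\widetilde{\Gamma}$ inside $\mathrm{SL}(2,\mathbb{R}) \ltimes \mathbb{R}^2$ or a suitable two-step construction), and then verifying faithfulness on the central $\mathbb{Z}$ separately from faithfulness on the quotient. I expect the author's proof to short-circuit this by citing a known linearity result for central extensions of Fuchsian groups or for Seifert $3$-manifold groups.
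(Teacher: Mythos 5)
Your outline correctly identifies where the difficulty lies, but it never resolves it, and the concrete escape routes you propose either beg the question or provably fail. Saying that Seifert fibered (circle-bundle) groups ``are linear because they are virtually the fundamental group of a circle bundle over $S_g$'' is circular --- that circle-bundle group is exactly the group whose linearity is at stake --- and residual finiteness of lattices does not give linearity. Worse, neither of your candidate targets can work when the extension class $e\neq 0$: for any homomorphism $\phi\colon \widetilde{\Gamma}\to \mathrm{SL}(2,\mathbb{R})\times\mathbb{R}$, the central generator $t$ must die in the $\mathbb{R}$-factor (since $t^{e}$ is a product of commutators, $t$ is torsion in the abelianization), while the kernel of the projection of $\phi(\widetilde\Gamma)$ to $\mathrm{SL}(2,\mathbb{R})$ is abelian and normal, hence central; so the $\mathrm{SL}(2,\mathbb{R})$-image of $\widetilde\Gamma$ is non-solvable, hence acts irreducibly, and by Schur's lemma the image of $t$ lies in $\{\pm I\}$, killing $t^{2}$ and destroying injectivity. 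Similarly, in $\mathrm{SL}(2,\mathbb{R})\ltimes\mathbb{R}^{2}$ the centralizer of any infinite-order element is solvable (nilpotent in the pure-translation case, and with abelian linear part otherwise), whereas the centralizer of $\phi(t)$ would have to contain the non-solvable group $\phi(\widetilde\Gamma)$. There is also a smaller inaccuracy upstream: pulling back the extension $\widetilde{\mathrm{PSL}(2,\mathbb{R})}\to\mathrm{PSL}(2,\mathbb{R})$ along a Fuchsian embedding of $\pi_1(S_g)$ yields the class $\pm\chi(S_g)=\pm(2-2g)$ (Milnor--Wood), not a generator of $H^2$; this part is repairable by the finite-index relations among the extensions $G_k$, but the decisive linearization step remains missing.

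What the paper does instead is short and purely algebraic, and it supplies precisely the ingredient you gestured at (``verify faithfulness on the central $\mathbb{Z}$ separately from faithfulness on the quotient'') without any cohomological classification or Fuchsian realization. Write
\[
G=\bigl\langle A_1,\dots,A_g,B_1,\dots,B_g,t \ \bigm|\ \textstyle\prod_{i}[A_i,B_i]=t^{p},\ t \text{ central}\bigr\rangle ,
\]
and consider two quotients: $f_1\colon G\to\pi_1(S_g)$ (kill $t$) and $f_2\colon G\to G/\langle\langle A_2,\dots,A_g,B_2,\dots,B_g\rangle\rangle$, the latter a finitely generated nilpotent, Heisenberg-type group in which the image of $t$ has infinite order. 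Since $\ker f_1=\langle t\rangle$ meets $\ker f_2$ trivially, the product map $f_1\times f_2$ embeds $G$ into a product of two linear groups (surface groups are linear; finitely generated nilpotent groups are linear). The nilpotent quotient is exactly the ``representation faithful on the center'' that your proposal needed but never produced.
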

\begin{proof}
   Let $$1\to \mathbb Z=\langle t\rangle\to G\to \langle a_1,...,a_g,b_1,...,b_g|\prod_{i=1}^n[a_i,b_i]=1\rangle \to 1$$ be a $\mathbb Z$-central extension of $\pi_1(S_g)$. 
   Then $G$ has a presentation 
   $$\langle A_1,...,A_g,B_1,...,B_g,t|\prod_{i=1}^n[A_i,B_i]=t^p\rangle,$$ for some $p\in\mathbb Z$, where $A_i$ (resp. $B_i$) is representative of $a_i$ (resp. $b_i$) in $G$.
   Observe that the quotient 
   \begin{eqnarray*}    
   & G/\langle\langle A_2,...,A_g,B_2,...,B_g\rangle\rangle &\\
  & \cong \langle A_1,B_1,t|[A_1,B_1]=t^p,[A_1,t]=[B_1,t]=1,[A_i,t]=1,[B_i,t]=1 \rangle
   \end{eqnarray*}
    is nilpotent and thus linear.
   We consider two quotient maps $$f_1:G\to \pi_1(S_g), f_2:G\to G/\langle\langle A_2,...,A_g,B_2,...,B_g\rangle\rangle.$$ Note that $\ker f_1$ is the center $\mathbb Z$, which is non-trivially mapped by $f_2$. Therefore, the product map $$f_1 \times f_2: G \rightarrow \pi_1(S_g) \times G/\langle\langle A_2,...,A_g,B_2,...,B_g\rangle\rangle $$ is injective into a product of linear groups, and the result follows.
\end{proof}
Note that a surface group is polyfree, implying that a $\mathbb{Z}$-extension of a surface group is polyfree.

\begin{lemma}\label{C4(3A2)22)}
\lbrack type $C_{4}(3A_{2})\sqcup \{\times 2,\times 2\}$] The group 
\begin{equation*}
\Pi =\langle a,b,c|aba=bab,bcb=cbc,abcb^{-1}a=bcb^{-1}abcb^{-1}\rangle
\end{equation*}%
is isomorphic to the Artin group $\mathrm{Art}_{333}$, thus linear and polyfree.
\end{lemma}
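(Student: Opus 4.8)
The plan is to transform the given presentation into the standard presentation of $\mathrm{Art}_{333}$ by an explicit change of generators, and then to invoke the linearity and polyfreeness of affine triangle Artin groups recorded in Subsection 2.2. The key observation driving everything is that the third relator becomes an ordinary braid relation once we introduce the conjugate $d := bcb^{-1}$: since $bcb^{-1}=d$, the word $abcb^{-1}a$ equals $ada$ and $bcb^{-1}abcb^{-1}$ equals $dad$, so the third relation is simply $ada=dad$.

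Concretely, I would argue by Tietze transformations. First I would adjoin a new generator $d$ together with its defining relation $d=bcb^{-1}$ (equivalently $c=b^{-1}db$), reaching
\[
\langle a,b,c,d \mid aba=bab,\ bcb=cbc,\ ada=dad,\ c=b^{-1}db\rangle.
\]
Then I would eliminate $c$ by substituting $c=b^{-1}db$ everywhere. The first relation is untouched. Substituting into the second relation turns $bcb=cbc$ into $db^2=b^{-1}dbdb$, which, after multiplying on the left by $b$ and on the right by $b^{-1}$, collapses to the braid relation $bdb=dbd$. What remains is
\[
\langle a,b,d \mid aba=bab,\ bdb=dbd,\ ada=dad\rangle,
\]
which is exactly $\mathrm{Art}_{333}$, the triangle Artin group with all three edges of the triangle labeled $3$.

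Finally, I would conclude using the facts collected in Subsection 2.2: the group $\mathrm{Art}_{333}$ is the affine triangle Artin group of type $\tilde A_2$, hence linear (affine-type Artin groups $\mathrm{Art}_{\tilde A_{n-1}}$ being linear), and it is polyfree by Squier. The only genuine computation is the reduction of the second relation to $bdb=dbd$; that is the step I expect to demand the most care, while everything else is routine bookkeeping of Tietze moves.
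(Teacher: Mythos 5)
Your proposal is correct and follows essentially the same route as the paper: the paper also sets $x=bcb^{-1}$ (your $d$), observes that the third relator becomes the braid relation $axa=xax$, and reduces $bcb=cbc$ to $bxb=xbx$ by the same substitution $c=b^{-1}xb$, yielding the standard presentation of $\mathrm{Art}_{333}$ and then citing the linearity of affine-type Artin groups and Squier's polyfreeness result from Subsection 2.2.
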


\begin{proof}
Let $x=bcb^{-1}.$ Rewrite the presentation of $\Pi $ with generators $a,b,x.$
Then $c=b^{-1}xb.$ The relator $abcb^{-1}a=bcb^{-1}abcb^{-1}$ becomes $%
axa=xax.$ The relator $bcb=cbc$ becomes 
\begin{eqnarray*}
bb^{-1}xbb &=&b^{-1}xbbb^{-1}xb, \\
xbb &=&b^{-1}xbxb, \\
bxb &=&xbx.
\end{eqnarray*}%
Therefore, the group $\Pi $ is isomorphic to 
\begin{equation*}
\langle a,b,x|aba=bab,bxb=xbx,axa=xax\rangle ,
\end{equation*}%
which is the Artin group $Art_{333}.$
\end{proof}

\begin{lemma}\label{Gr235}
    $\text{Gr}\langle 2,3,5 \rangle =\langle a,b,c|a^2=b^3=c^5=abc\rangle$ is virtually free and linear.
\end{lemma}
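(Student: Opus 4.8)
The plan is to prove that $G := \text{Gr}\langle 2,3,5\rangle$ is in fact \emph{finite}, from which both conclusions are immediate: a finite group is virtually free (its trivial subgroup is free of rank $0$ and of finite index) and linear (its regular representation embeds it into $\mathrm{GL}(n,\mathbb{C})$). Set $z := abc = a^2 = b^3 = c^5$. The first step is to observe that $z$ is central: since $z=a^2$ commutes with $a$, $z=b^3$ commutes with $b$, and $z=c^5$ commutes with $c$, the element $z$ commutes with every generator and hence with all of $G$.

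Next I would analyze the central quotient $\bar G := G/\langle z\rangle$. Imposing $z=1$ turns the relations into $a^2=b^3=c^5=abc=1$; eliminating $c=(ab)^{-1}$ from $abc=1$ and substituting into $c^5=1$ yields $(ab)^5=1$, so $\bar G=\langle a,b\mid a^2=b^3=(ab)^5=1\rangle$ is the $(2,3,5)$ von Dyck (orientation-preserving triangle) group, i.e.\ the rotation group of the icosahedron $\bar G\cong A_5$, which is finite of order $60$. A short computation with the abelianization (the relations force $a=b+c$, $2b=a+c$, $4c=a+b$, which together give $b=2c$, $a=3c$, and then $c=0$, so $a=b=c=0$) shows that $G$ is perfect.

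It then remains to prove that the central subgroup $\langle z\rangle$ is finite, equivalently that $G$ itself is finite; this is the only substantive point, and it is where the spherical inequality $\tfrac12+\tfrac13+\tfrac15>1$ must be used. Two routes are available. The direct route is to recognize $G$ as the binary icosahedral group: $\langle a,b,c\mid a^2=b^3=c^5=abc\rangle$ is the classical presentation of $2I=\pi_1(\Sigma(2,3,5))$, the fundamental group of the Poincar\'e homology sphere, a finite group of order $120$ that sits inside $\mathrm{SU}(2)\subset\mathrm{GL}(2,\mathbb{C})$. Alternatively, and self-containedly, since $G$ is a perfect central extension of the perfect group $A_5$, it is a quotient of the universal central extension (Schur cover) of $A_5$, namely $\mathrm{SL}(2,\mathbb{F}_5)$ of order $120$ (the Schur multiplier of $A_5$ is $\mathbb{Z}/2$); hence $|G|\le 120$ and $G$ is finite. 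Either way $G$ is finite, and the lemma follows.

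The main obstacle is precisely this finiteness of the center. The formal structure (a central $z$ with quotient a triangle group) is identical in the Euclidean and hyperbolic cases, where $z$ has infinite order and the analogous $\text{Gr}\langle p,q,r\rangle$ is a $\mathbb{Z}$-central extension of a (virtually) surface group rather than a finite group. The argument must therefore genuinely invoke the spherical arithmetic of $(2,3,5)$ — either by quoting the order-$120$ binary icosahedral group or by using the finiteness of the universal central extension of $A_5$ — and cannot be purely formal.
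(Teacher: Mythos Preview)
Your proof is correct. Both you and the paper begin identically: the element $z=a^{2}=b^{3}=c^{5}=abc$ is central, and the quotient $\text{Gr}\langle 2,3,5\rangle/\langle z\rangle$ is the $(2,3,5)$ von~Dyck group $A_{5}$. At this point the paper simply stops: since the kernel $\langle z\rangle$ is cyclic and the quotient is finite, $\text{Gr}\langle 2,3,5\rangle$ is cyclic-by-finite, hence virtually cyclic, hence virtually free and linear. You go further and establish, via perfectness together with the Schur multiplier of $A_{5}$ (or by directly recognising the binary icosahedral presentation), that $\text{Gr}\langle 2,3,5\rangle$ is in fact finite of order $120$. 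This is correct and yields sharper information, but it is not needed for the lemma as stated; your claim that finiteness of $\langle z\rangle$ is ``the only substantive point'' and ``cannot be purely formal'' overstates the difficulty. The spherical arithmetic of $(2,3,5)$ is already fully exploited in the step where the quotient $A_{5}$ comes out finite, and once that is known the lemma follows without deciding the order of $z$. (As a side remark, the paper records the kernel as $\mathbb{Z}$; your analysis shows it is actually $\mathbb{Z}/2$, though this does not affect the paper's conclusion.)
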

\begin{proof}
    Observe that $\langle a^2\rangle\subset \text{Gr}\langle 2,3,5 \rangle$ is a central cyclic subgroup. Thus, we have the short exact sequence:
    $$1\to \mathbb Z=\langle a^2\rangle\to \text{Gr}\langle 2,3,5 \rangle\to G:=\langle a,b,c|a^2=b^3=c^5=abc=1\rangle\to 1.$$
    Observe that $G=\langle b,c|b^3=c^5=(bc)^2=1\rangle\cong\langle x,y|x^2=y^3=(xy)^5=1\rangle$, where $x=bc, y=b$. Therefore, the group $G$ is actually the alternating group $A_5$, implying that $\text{Gr}\langle 2,3,5 \rangle$ is virtually free and linear.
\end{proof}

\begin{lemma}\label{toriclink}
    The toric link groups $T_{p,q}=\langle a,b|a^p=b^q\rangle$, where $\text{gcd}(p,q)=1$ and 
    $T_{2,2r}=\langle a,b|(ab)^r=(ba)^r\rangle$
    are linear and virtually polyfree.
\end{lemma}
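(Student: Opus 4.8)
The plan is to understand the two families $T_{p,q}$ with $\gcd(p,q)=1$ and $T_{2,2r}$ as well-known groups and exploit their structure. For the coprime case $T_{p,q}=\langle a,b\mid a^p=b^q\rangle$, my first observation is that the element $z:=a^p=b^q$ is central: conjugating $a$ by $z$ gives $a$ since $z$ is a power of $a$, and likewise $z$ commutes with $b$, so $z$ lies in the center. The quotient by $\langle z\rangle$ is $\langle a,b\mid a^p=b^q=1\rangle=\mathbb{Z}/p\ast\mathbb{Z}/q$, a free product of finite cyclic groups. This gives a central extension
\begin{equation*}
1\to\langle z\rangle\to T_{p,q}\to \mathbb{Z}/p\ast\mathbb{Z}/q\to 1.
\end{equation*}
Since a free product of finite groups is virtually free (by Karrass--Solitar / Bass--Serre theory, or just Kurosh), and $\langle z\rangle$ is infinite cyclic (it must be, as $T_{p,q}$ surjects onto $\mathbb{Z}$ via $a\mapsto q,\,b\mapsto p$, under which $z\mapsto pq\ne 0$), we conclude $T_{p,q}$ is virtually a $\mathbb{Z}$-central extension of a free group. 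A free group is a surface group of genus zero with punctures in the relevant sense, so I can invoke Lemma \ref{C5(A63A2)2}-style reasoning, or more directly note that a central $\mathbb{Z}$-extension of a free group is itself free-by-abelian and hence polyfree and linear. In fact, the cleanest route for polyfreeness is that the kernel $\mathbb{Z}$ and quotient (virtually free) give a poly-free filtration on a finite-index subgroup.

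For the second family $T_{2,2r}=\langle a,b\mid (ab)^r=(ba)^r\rangle$, the key step is to recognize this as the torus knot/link group structure: setting $c=ab$ and $d=ba$, one has the relation $c^r=d^r$, and these are conjugate ($d=a^{-1}ca$). I would instead use the standard presentation equivalence showing $T_{2,2r}\cong\langle x,y\mid x^2=y^{2r}\rangle$ when convenient, or directly analyze the center. Here the element $w:=(ab)^r=(ba)^r$ is central by the same conjugation argument, and the quotient $T_{2,2r}/\langle w\rangle$ is again a free product of finite cyclic groups (of the torus-link type), hence virtually free. Thus $T_{2,2r}$ is likewise virtually a $\mathbb{Z}$-central extension of a virtually free group, giving linearity and virtual polyfreeness by the same argument as above.

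The main obstacle I anticipate is verifying cleanly that the central quotient is virtually free and identifying the precise free-product structure in the $T_{2,2r}$ case, where the relation $(ab)^r=(ba)^r$ is less transparently a torus-link relation than $a^p=b^q$. A safe and uniform way around this is to use the general fact that torus link groups $T_{p,q}$ are fundamental groups of Seifert-fibered spaces (complements of torus links in $S^3$), hence are central extensions of Fuchsian groups; Fuchsian groups are virtually surface groups (possibly with boundary, i.e.\ virtually free or virtually closed-surface), and so the result reduces to Lemma \ref{C5(A63A2)2} together with the observation that virtually free and virtually surface groups are linear and polyfree. Concretely, I would first establish centrality of $z$ (resp.\ $w$), then identify the quotient, then apply virtual freeness/surface-ness, and finally assemble linearity and virtual polyfreeness from the extension. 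The only genuinely delicate point is confirming that $\langle z\rangle$ is infinite (not torsion), which the surjection to $\mathbb{Z}$ settles immediately.
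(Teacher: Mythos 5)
Your proposal follows essentially the same route as the paper: exhibit the central element ($a^p$, resp.\ $(ab)^r$), pass to the quotient, which is a free product of cyclic groups and hence virtually free, pull back a finite-index free subgroup to realize the group as virtually a central $\mathbb{Z}$-extension of a free group, and then get linearity from the splitting of such an extension and virtual polyfreeness from the resulting filtration (you are in fact more careful than the paper in checking that the central subgroup is infinite via the surjection to $\mathbb{Z}$). Two inaccuracies should be corrected, though neither is fatal since you also give the correct direct argument. First, the claimed ``standard presentation equivalence'' $T_{2,2r}\cong\langle x,y\mid x^2=y^{2r}\rangle$ is false: the abelianization of $T_{2,2r}$ is $\mathbb{Z}^2$ (the relation $(ab)^r=(ba)^r$ dies in the abelianization), while that of $\langle x,y\mid x^2=y^{2r}\rangle$ is $\mathbb{Z}\oplus\mathbb{Z}/2\mathbb{Z}$. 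Second, the quotient $T_{2,2r}/\langle (ab)^r\rangle$ is not a free product of \emph{finite} cyclic groups: setting $c=ab$, the relation $(ba)^r=1$ becomes redundant and the quotient is $\mathbb{Z}\ast\mathbb{Z}/r\mathbb{Z}$ (this is the paper's computation); your needed conclusion, virtual freeness, still holds, but your stated justification does not match the actual group. Finally, your phrase ``free-by-abelian and hence polyfree and linear'' is loose: the group is $\mathbb{Z}$-by-free, polyfreeness is immediate from that filtration, but linearity requires the extension to split as $\mathbb{Z}\times F$ (via $H^2(F,\mathbb{Z})=0$, or simply because free groups are projective), which is exactly the step the paper makes explicit.
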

\begin{proof}
    For $T_{p,q}$, observe that $\langle a^p\rangle\subset T_{p,q}$ is a central cyclic subgroup. Therefore, we have the short exact sequence
    $$1\to \mathbb Z=\langle a^p\rangle\to T_{p,q}\to G:=\langle a,b|a^p=b^q=1\rangle\to 1.$$
    Note that the quotient group $G\cong \mathbb Z/p\mathbb Z\ast \mathbb Z/q\mathbb Z$, is virtually free. This implies that $T_{p,q}$ is virtually a central cyclic extension of a free group $F$. Note that for a central cyclic extension of $F$, the extensions are classified by the second cohomology group $H^2(F,\mathbb Z)$, which is trivial since $F$ is free. Therefore, the group $T_{p,q}$ is virtually a direct product of $\mathbb Z$ and $F$, hence linear.

    For $T_{2,2r}$, similarly we have the short exact sequence
    $$1\to \langle (ab)^r\rangle\to T_{2,2r}\to H:=\langle a,b|(ab)^r=(ba)^r=1\rangle\to 1.$$ Observe that $(ab)^r=1$ implies $(ba)^r=a^{-1}a(ba)^r=a^{-1}(ab)^ra=1$. Therefore, we have an isomorphism $H\cong\langle a,c|c^r=1\rangle\cong \mathbb Z\ast\mathbb Z/r\mathbb Z$, where $c:=ab$. The group $H$ is virtually free. With a similar argument as $T_{p,q}$, the group $T_{2,2r}$ is linear.
    The same exact sequences show that the groups are virtually polyfree.
\end{proof}


\begin{lemma}\label{C3C2}
\lbrack type $C_{3}\sqcup C_{2}$] The group%
\begin{equation*}
\Pi =\langle a,b|[a^{3},b]=1,ab^{2}=ba^{2}\rangle
\end{equation*}%
is virtually $\mathbb{Z}^{2}.$
\end{lemma}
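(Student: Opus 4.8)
The plan is to exploit the evident central element $a^{3}$ and to realize $\Pi$ as a central extension whose quotient is easy to recognize. First I would observe that $\langle a^{3}\rangle$ is central: it commutes with $a$ trivially and with $b$ by the first relation. Moreover $a$ has infinite order, since the abelianization is $\Pi_{\mathrm{ab}}\cong\mathbb Z$ (the second relation forces the images of $a$ and $b$ to coincide, in agreement with Proposition \ref{AbelianFund} for the type $C_3\sqcup C_2$ with $\gcd(3,2)=1$), and $a$ maps to a generator; hence $\langle a^{3}\rangle\cong\mathbb Z$. This yields a central extension
$$1\longrightarrow \mathbb Z=\langle a^{3}\rangle \longrightarrow \Pi \longrightarrow \bar\Pi:=\langle a,b\mid a^{3}=1,\ ab^{2}=ba^{2}\rangle \longrightarrow 1,$$
so the whole problem reduces to identifying the isomorphism type of $\bar\Pi$.

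The main step is to determine $\bar\Pi$ by the Reidemeister--Schreier method (Theorem \ref{schreier}). Since $a$ and $b$ have equal image in the abelianization, there is an epimorphism $\bar\Pi\to\mathbb Z/3$ with $a,b\mapsto 1$; I would apply Theorem \ref{schreier} to its kernel $\bar K$ using the Schreier transversal $\{1,a,a^{2}\}$. Writing $x=ba^{-1}$ and $x_{n}=a^{n}xa^{-n}$, the relation $ab^{2}=ba^{2}$ rewrites as $x_{n}=x_{n+1}x_{n+2}$, while $a^{3}=1$ forces $x_{n+3}=x_{n}$, so that
$$\bar K=\langle x_{0},x_{1},x_{2}\mid x_{0}=x_{1}x_{2},\ x_{1}=x_{2}x_{0},\ x_{2}=x_{0}x_{1}\rangle.$$
Eliminating $x_{0}=x_{1}x_{2}$ turns this into $\langle p,q\mid p=qpq,\ q=pqp\rangle$ with $p=x_{1},\,q=x_{2}$, a small two-generator group whose isomorphism type can be read off directly; $\bar\Pi$ is then recovered as the extension $\bar K\rtimes\mathbb Z/3$ induced by the $a$-conjugation $x_{n}\mapsto x_{n+1}$.

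Once $\bar\Pi$ is identified and seen to be infinite and virtually cyclic, I would finish by passing to a suitable finite-index subgroup. Choosing a finite-index infinite-cyclic subgroup $\bar H\le\bar\Pi$ and pulling it back gives a finite-index $\tilde H\le\Pi$ sitting in a central extension $1\to\mathbb Z\to\tilde H\to\mathbb Z\to 1$; this extension splits because $H^{2}(\mathbb Z;\mathbb Z)=0$, so $\tilde H\cong\mathbb Z^{2}$ and $\Pi$ is virtually $\mathbb Z^{2}$. I expect the main obstacle to be precisely the identification in the middle step: the Reidemeister--Schreier presentation of $\bar K$ must be recognized exactly, since its isomorphism type together with the order of the induced monodromy $x_{n}\mapsto x_{n+1}$ is what pins down $\bar\Pi$ and hence the virtual rank of $\Pi$. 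A careless reduction of the Fibonacci-type relations $x_{n}=x_{n+1}x_{n+2}$ is exactly where an error would enter, so I would treat that computation with particular care before invoking the splitting argument.
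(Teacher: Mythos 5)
Your reduction is set up correctly --- $\langle a^{3}\rangle\cong\mathbb Z$ is central, $\bar\Pi=\langle a,b\mid a^{3}=1,\ ab^{2}=ba^{2}\rangle$, and your Reidemeister--Schreier presentation of $\bar K=\ker(\bar\Pi\to\mathbb Z/3)$ is exactly right --- but the proof fails at precisely the step you postponed, and it fails in a way that destroys the conclusion. The group $\bar K=\langle p,q\mid p=qpq,\ q=pqp\rangle$ is the Fibonacci group $F(2,3)$, and it is \emph{finite}: it is the quaternion group $Q_{8}$. Indeed, the two relations are equivalent to $pqp^{-1}=q^{-1}$ and $qpq^{-1}=p^{-1}$; then $(pq)^{2}=p(qpq)=p^{2}$ and $(qp)^{2}=q(pqp)=q^{2}$, and since $qp=p^{-1}(pq)p$ these force $p^{2}=q^{2}=:z$; conjugation by $q$ inverts $p$, hence inverts $z=p^{2}$, while $z=q^{2}$ commutes with $q$, so $z^{2}=1$; finally $qp=p^{-1}q$ lets one write every element as $p^{a}q^{b}$ with $0\le a\le 3$, $0\le b\le 1$, so $|\bar K|\le 8$, and $p\mapsto i$, $q\mapsto j$ is a surjection onto $Q_{8}$ (check $jij=i$, $iji=j$), whence $\bar K\cong Q_{8}$. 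Consequently $\bar\Pi\cong Q_{8}\rtimes\mathbb Z/3\cong\mathrm{SL}(2,3)$, the binary tetrahedral group of order $24$ --- not an infinite virtually cyclic group. Your final step therefore collapses: there is no finite-index infinite cyclic $\bar H\le\bar\Pi$ to pull back, and what your central extension actually yields is that $\langle a^{3}\rangle\cong\mathbb Z$ has index $24$ in $\Pi$, i.e.\ $\Pi$ is virtually $\mathbb Z$. (In fact the same rewriting applied to $\Pi$ itself, where $[a^{3},b]=1$ gives $x_{n+3}=x_{n}$ directly, shows $\Pi\cong Q_{8}\rtimes\mathbb Z$ with monodromy $i\mapsto j\mapsto k\mapsto i$.)

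The striking consequence is that your computation, completed honestly, refutes the statement you were asked to prove: every subgroup of a virtually cyclic group is virtually cyclic, so $\Pi$ contains no copy of $\mathbb Z^{2}$ at all and in particular is not virtually $\mathbb Z^{2}$. This also pinpoints a flaw in the paper's own (different) argument, which shows that $b^{3}$ is also central and that $\Pi/\langle a^{3},b^{3}\rangle$ is a quotient of the order-$12$ von Dyck group $\langle x,y\mid x^{3}=y^{3}=(xy)^{2}=1\rangle$: this correctly proves that $\Pi$ is center-by-finite, hence virtually abelian, but the jump to ``virtually $\mathbb Z^{2}$'' tacitly assumes $\langle a^{3},b^{3}\rangle\cong\mathbb Z^{2}$, which is false. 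Under $\Pi\cong Q_{8}\rtimes\mathbb Z$ one computes $b^{3}=(ba^{-1})(aba^{-2})(a^{2}ba^{-3})a^{3}=x_{0}x_{1}x_{2}\,a^{3}$, and $x_{0}x_{1}x_{2}=ijk=-1$ is the central involution of $Q_{8}$; hence $b^{6}=a^{6}$ and $\langle a^{3},b^{3}\rangle\cong\mathbb Z\times\mathbb Z/2$. The correct statement of the lemma is that $\Pi$ is virtually $\mathbb Z$ (equivalently, $Q_{8}$-by-$\mathbb Z$). This does not damage the paper's main theorem, since virtually $\mathbb Z$ groups are certainly linear and virtually polyfree, but neither your proposal as written nor the paper's proof establishes ``virtually $\mathbb Z^{2}$'', and indeed no proof can.
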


\begin{proof}
It is clear that $a^{3}$ lies in the center of $\Pi .$ Since $a^{3}b=ba^{3},$
we have%
\begin{eqnarray*}
a^{3}b &=&ba^{2}a=ab^{2}a, \\
a^{2}b &=&b^{2}a, \\
a^{2}ba^{-1} &=&b^{2}, \\
(ab)a^{2}ba^{-1} &=&(ab)b^{2}, \\
a(ba^{2})ba^{-1} &=&ab^{3}, \\
a(ab^{2})ba^{-1} &=&ab^{3}a^{-1}(a), \\
a(ab^{3}a^{-1}) &=&(ab^{3}a^{-1})a, \\
ab^{3}a^{-1} &=&b^{3},
\end{eqnarray*}%
which proves that $b^{3}$ also lies in the center of $\Pi .$ In the quotient
group $\Pi /\langle a^{3},b^{3}\rangle ,$ the relator $ab^{2}=ba^{2}$
implies that $ab^{-1}=ba^{-1}$ and $(ab^{-1})^{2}=1.$ This shows that the
natural homomorphism%
\begin{eqnarray*}
\langle x,y|x^{3} &=&1,y^{3}=1,(xy)^{2}=1\rangle \rightarrow \Pi /\langle
a^{3},b^{3}\rangle \\
x &\mapsto &a, \\
y &\mapsto &b^{-1}
\end{eqnarray*}%
is surjective. Since $\langle x,y|x^{3}=1,y^{3}=1,(xy)^{2}=1\rangle $ is an index-2 subgroup of the
finite Coxeter group $\mathrm{Cox}_{233}$, we know that $\Pi /\langle a^{3},b^{3}\rangle $ is
finite.
\end{proof}


\begin{lemma}\label{C23C1}
\lbrack type $C_{2}\sqcup 3C_{1}$] The group%
\begin{equation*}
\Pi =\langle a,b,c|[a,b]=[a,c^{-1}bc]=1,(bc)^{2}=(cb)^{2}\rangle
\end{equation*}%
has an index-2 right-angled Artin subgroup.
\end{lemma}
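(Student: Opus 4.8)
The plan is to exhibit the index-two subgroup as the kernel of an explicit surjection $\Pi\to\mathbb{Z}/2\mathbb{Z}$ and then compute a presentation of that kernel by the Reidemeister--Schreier method (Theorem \ref{schreier}), massaging it into a right-angled Artin presentation. First I would observe that every defining relator of $\Pi$ lies in the kernel of the homomorphism $\epsilon\colon\Pi\to\mathbb{Z}/2\mathbb{Z}$ sending $a,b\mapsto 0$ and $c\mapsto 1$: the two commutators map to $0$ automatically, and $(bc)^2(cb)^{-2}$ has total $b$- and $c$-exponent equal to $0$. Hence $\epsilon$ is a well-defined surjection and $H:=\ker\epsilon$ has index $2$.

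Next I would run Reidemeister--Schreier with the Schreier transversal $R=\{1,c\}$. The nontrivial Schreier generators are $A=a$, $B=b$, $A'=cac^{-1}$, $B'=cbc^{-1}$, and $t=c^2$ (the generator $s_{1,c}$ is trivial). Rewriting the three defining relators, each conjugated by the two coset representatives $1$ and $c$, and using the identities $c^{-1}bc=t^{-1}B't$, $(bc)^2=BB't$, and $(cb)^2=B'tB$, I expect to obtain the six relations
\begin{equation*}
[A,B]=1,\quad [A',B']=1,\quad [A,t^{-1}B't]=1,\quad [A',B]=1,\quad BB't=B'tB,\quad B'tB=tBB'.
\end{equation*}

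The heart of the argument is then to convert this into a right-angled Artin presentation by a change of generators. I would first replace $B'$ by $q:=BB't$; the last two relations then read exactly $[B,q]=1$ and $[q,t]=1$, so $q$ centralizes both $B$ and $t$. Using $B'=B^{-1}qt^{-1}$ together with $[A,B]=[A',B]=1$ and the centrality of $q$, the two remaining noncommutator relations $[A',B']=1$ and $[A,t^{-1}B't]=1$ collapse to $[A',qt^{-1}]=1$ and $[A,qt^{-1}]=1$. Finally, substituting $s:=qt^{-1}$ in place of $t$, all six relations become commutators of generators, yielding
\begin{equation*}
H=\langle A,A',B,q,s \mid [A,B],[A',B],[A,s],[A',s],[B,q],[q,s]\rangle .
\end{equation*}
This is the right-angled Artin group whose defining graph is the complete bipartite graph $K_{3,2}$ with parts $\{A,A',q\}$ and $\{B,s\}$; equivalently $H\cong F_3\times F_2$, which settles the claim.

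I expect the main obstacle to be bookkeeping that is routine in spirit but genuinely delicate in execution: correctly applying the rewriting function $\tau$ to the non-commutator relator $(bc)^2(cb)^{-2}$ and to its conjugate by $c$, tracking the coset representative at each letter, and then spotting the two successive changes of generators $q=BB't$ and $s=qt^{-1}$ that simultaneously turn every braid-type relation into a commutator. Once the correct generators are in hand, recognizing the graph as $K_{3,2}$ and hence identifying $H$ with $F_3\times F_2$ is immediate.
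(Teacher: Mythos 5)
Your proof is correct and takes essentially the same approach as the paper: both exhibit the index-2 subgroup as the kernel of the same homomorphism onto $\mathbb{Z}/2$ (your $\epsilon$ coincides with the paper's $\phi$ under the substitution $x=bc$), compute it by Reidemeister--Schreier, and recognize the RAAG on $K_{3,2}$, i.e.\ $F_3\times F_2$. The only difference is bookkeeping order: the paper changes generators ($x=bc$) \emph{before} applying Reidemeister--Schreier so the output is immediately a RAAG presentation, whereas you run Reidemeister--Schreier on the original presentation and then untangle the result with the substitutions $q=BB't$, $s=qt^{-1}$.
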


\begin{proof}
Let $x=bc.$ Then $c=b^{-1}x.$ The group $\Pi $ has a presentation with
generators $a,b,x.$ The relator $[a,b]=1$ is equivalent to%
\begin{equation*}
bab^{-1}=a.
\end{equation*}%
The relator $[a,c^{-1}bc]=1$ becomes%
\begin{eqnarray*}
ac^{-1}bc &=&c^{-1}bca, \\
ax^{-1}bx &=&x^{-1}bxa, \\
xax^{-1} &=&b(xax^{-1})b^{-1}.
\end{eqnarray*}%
The relator $(bc)^{2}=(cb)^{2}$ becomes%
\begin{eqnarray*}
x^{2} &=&b^{-1}(x^{2})b, \\
bx^{2}b^{-1} &=&x^{2}.
\end{eqnarray*}%
Therefore, $\Pi $ has a new presentation%
\begin{equation*}
\langle
a,x,b|bab^{-1}=a,b(xax^{-1})b^{-1}=xax^{-1},bx^{2}b^{-1}=x^{2}\rangle ,
\end{equation*}%
which is an HNN extension of free group $\langle a,x\rangle $ by the stable
letter $b$ along the subgoup $\langle a,xax^{-1},x^{2}\rangle .$ Define 
\begin{eqnarray*}
\phi &:&\langle
a,x,b|bab^{-1}=a,b(xax^{-1})b^{-1}=xax^{-1},bx^{2}b^{-1}=x^{2}\rangle
\rightarrow \mathbb{Z}/2 \\
&a,b &\mapsto 0, \\
&x &\mapsto 1.
\end{eqnarray*}%
Choose $R=\{1,x\}$ as the representative of left cosets for $\ker \phi <\Pi
. $ The Reidemeister-Schreier method gives a presentation of $\ker \phi $
with generators 
\begin{equation*}
a,b,a^{\prime }:=xax^{-1},b^{\prime }:=xbx^{-1},t=x^{2}
\end{equation*}%
and relators 
\begin{eqnarray*}
bab^{-1} &=&a,ba^{\prime }b^{-1}=a^{\prime },btb^{-1}=t, \\
b^{\prime }a^{\prime }b^{\prime -1} &=&a^{\prime },b^{\prime }tb^{\prime
-1}=t^{},b^{\prime }ab^{\prime -1}=a,
\end{eqnarray*}%
where the last relator comes from the $x$-conjuate to $%
b(xax^{-1})b^{-1}=xax^{-1}$ (i.e. $b^{\prime }(xxax^{-1}x^{-1})b^{\prime
-1}=x^{2}ax^{-2},$ which is equivalent to $b^{\prime }ab^{\prime -1}=a$
using $b^{\prime }tb^{\prime -1}=t^{}$). It is direct that the
presentation shows that $\ker \phi$ is a right-angled Artin group.
\end{proof}



\bibliographystyle{alpha}
\bibliography{sample}

\end{document}